\newcommand{\seqnum}[1]{\href{http://oeis.org/#1}{\underline{#1}}}
\definecolor{webgreen}{rgb}{0,.5,0}
\definecolor{webbrown}{rgb}{.6,0,0}
\begin{document}

\theoremstyle{plain}
\newtheorem{theorem}{Theorem}
\newtheorem{corollary}[theorem]{Corollary}
\newtheorem{lemma}[theorem]{Lemma}
\newtheorem{proposition}[theorem]{Proposition}

\newtheorem{definition}[theorem]{Definition}
\newtheorem{example}[theorem]{Example}
\newtheorem{conjecture}[theorem]{Conjecture}

\theoremstyle{remark}
\newtheorem{remark}[theorem]{Remark}

\newcommand{\Q}{\mathbb Q}
\newcommand{\Z}{\mathbb Z}
\newcommand{\C}{\mathbb C}
\newcommand{\N}{\mathbb N}

\def\glqq{,\,\!\!,}
\def\grqq{`\,\!`}
\def\eg{{\it e.g.},\,}
\def\Eg{{\it E.g.},\,}
\def\ie{{\it i.e.},\,}
\def\Ie{{\it I.e.},\,}
\def\viz{{\it viz}\ }
\def\etc{\,{\it etc.}\,}
\def\via{{\it via}\, }
\def\sspp{\,+\,}
\def\sspm{\,-\,}
\def\ssppm{\,\pm\,}
\def\sspeq{\,=\,}
\def\sspdef{\, :=\,}
\def\sspneq{\,\neq\,}
\def\sspkl{\,<\,}
\def\sspgr{\,>\,}
\def\sspgeq{\,\geq\, }
\def\sspleq{\,\leq\,} 
\def\spgeq{\ \geq\  }
\def\spleq{\ \leq\ } 
\def\sspmapsto{\,\mapsto\,}
\def\pn{\par\noindent}
\def\pb{\par\bigskip}
\def\ps{\par\smallskip}
\def\pbn{\par\bigskip\noindent}
\def\psn{\par\smallskip\noindent}
\def\sspentsp{\, \hat =\, }
\def\sspequiv{\,\equiv\,}
\def\sspnotequiv{\,\not\equiv  \,}
\def\sspmapsto{\,\mapsto\,}
\def\sspdiv{\,|\,}
\def\Beq{\begin{equation}}
\def\Eeq{\end{equation}}
\def\Eq#1{equation\, (#1)}
\def\Beqarray{\begin{eqnarray}}
\def\Eeqarray{\end{eqnarray}}
\def\sspin{\,\in\,}
\def\sspni{\,\ni\,}
\def\sspfed{\,=:\,}
\def\sspto{\,\to\,}
\def\notdiv{\,\nmid\,}
\def\rhs{right-hand side\,}
\def\lhs{left-hand side\,}
\def\dstyle#1{$\displaystyle #1 $}
\def\union{\cup}
\def\floor#1{\left\lfloor{#1}\right\rfloor}
\def\ceil#1{\left\lceil{#1}\right\rceil}
\def\abs#1{\vert {\,#1\,} \vert}
\def\Cases2#1#2#3#4{\left\{\begin{array}{ll}#1&\mbox{#2}\\ & \\#3&\mbox{#4}\end{array}\right.}
\def\binomial#1#2{{#1} \choose {#2}}
\def\sspdiv{\,|\,} 
\def\rprod{\prod\llap {\raise 8pt\hbox{$\rightarrow \thinspace$}}} 
\def\range#1#2{#1,\,\count15=#1 \advance\count15 by +1 \number\count15,\,...,\,#2}
\def\rangeinf#1{#1,\, \count16=#1 \advance\count16 by +1 \number\count16,\,...\,}
\font\smallcmr=cmr6
\def\cProd{\rlap{$\>\,\!${\raise2pt\hbox{\smallcmr c}}}\Pi}
\def\amodn#1#2{{#1}\,(\text{mod}\,{#2})}
\def\modstar#1{(\text{mod}^*\,{#1})}
\def\amodstarn#1#2{{#1}\,\modstar{#2}}
\def\modan#1#2{\text{mod}(#1,\,#2)}
\def\modstaran#1#2{\text{mod}^*(#1,\,#2)}

\def\repjn#1#2{_{#2}{\overline{#1}}}
\def\repstjn#1#2{_{#2}{\overline{#1}}^{\,*}}
\def\sspsimn{\,\buildrel n \over\sim\,}
\def\sspsimstarn{\,\buildrel {*n} \over\sim\,}
\def\sspsimstarn#1{\,\buildrel *#1 \over\sim\,}
\def\shouldbe{\,\buildrel ! \over \sspeq}

\begin{center}
\vskip 1cm{\LARGE\bf 
On the Equivalence of Three Complete Cyclic Systems of Integers  
}
\vskip 1cm
\large
Wolfdieter Lang \footnote{ \url{http://www.itp.kit.edu/~wl}} \\
Karlsruhe \\
Germany\\
\href{mailto:wolfdieter.lang@partner.kit.edu}{\tt wolfdieter.lang@partner.kit.edu}
\end{center}

\vskip .2 in

\begin{abstract}
The system of coaches by {\sl Hilton} and {\sl Pedersen}, the system of cyclic sequences of {\sl Schick}, and {\sl Br\"andli}-{\sl Beyne}, related to diagonals in regular $(2\,n)$-gons, and the system of modified modular doubling sequences elaborated in this paper are proved to be equivalent. The latter system employs the modified modular equivalence used by {\sl Br\"andli}-{\sl Beyne}. A sequence of {\sl Euler} tours related on {\sl Schick}'s cycles of diagonals is also presented.  
\end{abstract}
\section{Introduction}
{\bf A) Complete coach system $\bf \Sigma(b)$}
\psn
{\sl Hilton} and {\sl Pedersen} \cite{HP} found in the context of paper-folding the quasi-order theorem and introduced a complete system of coaches $\Sigma(b)$, for $b \sspeq 2\,n \sspp 1$, $n\sspin \N$. Each coach $\Sigma(b,\, i)$, for $i\sspin\{\range{1}{c(b)}\}$ consists of two rows of length $r(b,\, i)$. There is the upper row $A(b,\,i)$ with odd positive integers $a(b,\,i)_{j}$, for $j \sspeq \range{1}{r(b,\, i)}$, that are relatively prime to $b$, and the lower row $K(b,\,i)$ with positive integers $k(b,\,i)_{j}$, that are certain maximal exponents of $2$. All odd numbers from the upper rows of $\Sigma(b)$ constitute the smallest positive reduced residue system modulo $b$ for odd numbers (here called $RRSodd(b)$, given in \seqnum{A216319}). The recurrence for the $a_j$-numbers, defining also the $k_j$ numbers, are (the arguments $(b,\,i)$ are suppressed)  
\Beq\label{arec}
a_{j+1}\sspeq \frac{b\sspm a_j}{2^{k_j}}, \ \ \text{for}\ j\sspgeq 1,\ \text{and\ odd\ input}\ a_1 \sspkl \frac{b}{2},\ \text{with}\  \gcd(a_j,\,b)\sspeq 1,   
\Eeq
where $2^{k_j}$ is the maximal power of $2$ dividing $b\sspm a_j$, \ie $k_j$ is the $2$-adic valuation of $b \sspm a_j$. The length of a coach is determined by the cyclic condition $a(b,\,i)_{r(b,\,i)\sspp 1}\sspeq a(b,\,i)_1$ for the first time (the primitive period length is $r(b,\,i)$). The sum of the $k$ values of the lower row is identical for all coaches of $\Sigma(b)$:  \dstyle{k(b)\sspdef \sum_{j=1}^{r(b,\,i)}\,k(b,\,i)_j}, for each $i\sspin \{\range{1}{c(b)}\}$. The number of coaches $c(2\,n+1)$ is given in OEIS\cite{OEIS} \seqnum{A135303}$(n)$, for $n\sspgeq 1$, and  $k(2\,n+1)$, called the quasi-order of $\amodn{2}{(2\,n+1)}$, is given in \seqnum{A003558}$(n)$. 
\begin{example}(\cite{HP}, p.\,262)\label{Ex1}
\Beq
\Sigma(65)\sspeq \left| \begin{array}{c|ccc|ccc|ccccc}
 1 & 3 & 31 & 17 & 7 & 29 & 9 & 11 & 27 & 19 & 23 & 21 \\
 6 &  1 & 1 & 4 &1 & 2 & 3 & 1 & 1 & 1 & 1 & 2\end{array} \right|\, .
 \Eeq
\end{example} 
\pn
In the following a coach is written like $\Sigma(65,\,2)\sspeq [A(65,\,2),\,K(65,\,2)]\sspeq[[3,\,31,\,17],[1,\,1,\,4]]$.  
\pn
Here $c(65)\sspeq 4$, $k(65)\sspeq 6$, and the $r-$tuple is $\{r(65,1),...,\,r(65, 4)\}\sspeq \{1,\,3,\,3,\,5\}$.
\pn 
$k(b)$ satisfies the quasi-order theorem (\cite{HP}, p. 102 and p. 261)
\Beq \label{quoth}
2^{k(b)} \sspequiv \amodn{(-1)^{r(b,\,i)}}{b},\ \ \text{for\ each}\ \ i\sspin \{\range{1}{c(b)}\}\,. 
\Eeq
The $r-$tuples for $b\sspeq \{3,\,5,\,...,\,41\}$ are shown in \seqnum{A332434}, with $(-1)^{r(2\,n+1)}$ given in \seqnum{A332433}$(n)$, for $n \sspgeq 1$.
The coach theorem is (\cite{HP}, p. 262)
\Beq \label{coachth}
c(b)\,k(b)\sspeq \frac{\varphi(b)}{2},\ \ \text{for} \ \ b\sspeq 2\,n\sspp 1,\, n\sspin \N\,,
\Eeq
where {\sl Euler}'s totient is $\varphi\sspeq$\seqnum{A000010}. 
In the example \dstyle{4\cdot 6\sspeq 24\sspeq \frac{\varphi(65)}{2}}. 
\pbn
{\bf B) Complete cycle system $\bf SBB(b)$}
\psn
{\bf i)} In {\sl Schick}'s book \cite{Schick} a geometric algorithm in a unit circle is proposed which leads to periodic integer sequences with the recurrence relation for $\{q_j(b)\sspeq q_j(b;\,q_0)\}$ (in \cite{Schick} $p$ is used instead of the present odd $b$)
\Beq \label{SchickRec}
q_j(b) \sspeq b \sspm 2\,\abs{q_{j-1}(b)}, \ \ \text{for}\ \ j\sspin \N\,, 
\Eeq
with a certain odd input $q_0$, with $\gcd(q_0,\,b)\sspeq 1$, starting with $q_0(1)\sspeq (-1)^{\frac{b+1}{2}}$. This sequence of distinct odd numbers is periodic with length of the (principal) period $pes(b)$ ($pes$ stands in \cite{Schick} for {\it periode sp\'ectrale}, because the construction is called {\it Spektalalgorithmus}). The recurrence leads to identical period length for each input $q_0$. If not yet all odd members from $RRSodd(b)$, with $\#RRSodd(b)\sspeq\delta(b)\sspeq$\seqnum{A055034}$(b)\sspeq \varphi(b)/2$, are present then a new periodic sequence with the smallest missing member of $RRSodd(b)$ as input $q_0(2)$ is considered (where the sign is \dstyle{(-1)^{\frac{b+1}{2}}\,(-1)^{2\,q_0(2)-1}) }, \etc, until all elements of $RRSodd(b)$ appear. The number of such disjoint periodic sequences with odd members coprime to $b$ is in \cite{Schick}, Korollar, p. 14, called $B$. The recurrence shows that the primitive period length $pes(b)$ satisfies the same formula like $k(b)$ from the coach system, and it divides $\varphi(b)/2$. This leads to {\sl Schick}'s theorem, and $B(b)\sspeq c(b)$, the number of coaches, in accordance with the coach theorem \ref{coachth}. 
 \Beq \label{SchickId}
B(b)\,pes(b)\sspeq \frac{\varphi(b)}{2}\, .
\Eeq
See the tables 3.1 to 3.10 in \cite{Schick}, pp. 158 - 166 for these signed periodic sequences. These cycles will be called here $SBB(b,\,i)$  for $i\sspin \range{1}{B(b)}$ for the different inputs $q_0(b,\,i)$. The $BB$ in the $SBB$ notation comes from the following part {\bf ii)} on the {\sl Br\"andli}-{\sl Beyne} paper using unsigned sequences. The complete cycle system is then $SBB(b)$.
\begin{example}(\cite{Schick}, p. 161)\label{Ex2}
\Beqarray
SBB(65)&\sspeq& \{(-1,\, 63,\,-61,\,-57,\,-49,\,-33),\,(3,\,59\,-53, -41,\,-17,\,31),\,\nonumber  \\
&&(7,\,51,\,-37,\,-9,\,47,\,-29),\, (11,\,43\,\,-21,\,23,\,19,\,27)\}\, .
 \Eeqarray
\end{example} 
\pn
Here $B(65)\sspeq 4,\, pes(65)\sspeq 6$, and \dstyle{4\cdot 6 \sspeq \frac{\varphi(65)}{2}\sspeq 24}. Note that \dstyle{\delta(b)\sspeq \frac{\varphi(2\,b)}{2} \sspeq \frac{\varphi(b)}{2}}. 
\pbn
{\bf ii)} In the paper by {\sl Br\"andli}-{\sl Beyne} \cite{BB} unsigned {\sl Schick} sequences  are considered (we use the same symbols $q_j\sspeq q_j(b,\,i)$ but henceforth as positive integers)
\Beq \label{BBRec}
q_j(b,\,i) \sspeq \abs{b \sspm 2\,q_{j-1}(b,\,i)}, \ \ \text{for}\ \ j\sspin \N\,, 
\Eeq
with certain positive odd inputs $q_{0}(b,\, i)$, for $i\sspin \{\range{1}{B(b)}\}$, and $\gcd(q_{0}(b,\, i),\, b)\sspeq 1$, as explained above. The complete system of (sign-less) cycles is also named $SBB(b)$ (without risk of confusion).\psn
The numbers in these cycles correspond to certain length ratios diagonal/radius in a regular $(2\,b)$-gon, \viz \dstyle{d_j^{(2\,b)} \sspeq 2\,\sin\left(\frac{\pi}{2\,b}\,j\right)}, for $j\sspin \{\range{1}{2\,b\sspm 1}\}$. The diagonals connect the vertices $V_j^{(2\,b)}$, for $j\sspin\{\range{0}{2\,b}\}$, with $V_0^{(2\,b)}$ (Cartesian coordinates $(0,\, r)$, with radius $r$ of the circumscribing circle). The vertices are taken in the positive (counter-clockwise) sense, starting with the length ratio for the side $s(2\,b)/r \sspeq d_1^{(2\,n)}$.
\pn 
The diagonals in the upper half plane (including the real axis) are labeled with $j\sspin \{\range{1}{b}\}$. The (odd) numbers $k$ of each cycle stand for the labels of the diagonals $d_k^{(2\,b)}$.
\begin{example}\label{Ex3}
For $b\sspeq 17$ the two cycles are ($B(17)\sspeq 2$,\, $pes(17)\sspeq 4$)
\Beq
SBB(17,\,1) = (1,\,15,\,13,\,9), \ \ \text{and}\ \  SBB(17,\,2) = (3,\,11,\,5,\,7).
\Eeq
The elements of $RRSodd(17)$ are all odd numbers from 1 to 15.
\pn
The length ratios from the first cycle are (Maple \cite{Maple} 10 digits): $d_1^{(34)}\approx .1845367190 $, $d_{15}^{(34)}\approx 1.965946199$, $d_{13}^{(34)}\approx 1.864944459$, and $d_{9}^{(34)}\approx 1.478017835$. See {\sl Figure 1}.\pn
For $b\sspeq 11$, with $B(11) \sspeq 1$ and $pes(11)\sspeq 5$ see \cite{BB}, {\it Figure 1}, with the diagonals called $\sigma_{2\,n+1}$, for $n\sspin \{\range{1}{5}\}$.   
\end{example} 
\psn
Each of the $B(b)$ cycles, for each odd $b\sspgeq 3$, interpreted as length ratios for diagonals, suggests to consider a trail in the $(2\,b)$-gon by following the $pes(b)$ diagonals in the order of the cycle repeatedly, starting from node (vertex) $V^{(2\,b)}_0$ in the positive sense until a periodic sequence, an oriented {\sl Euler} tour, is completed. That this is always possible will be proved in the next section. Periodicity will be reached after $L(b,\,i)$ steps. For $b\sspeq 7$ see {\it Figure 2}. Because of periodicity one can start at any vertex which is reached in the tour starting with vertex $V^{(2\,b)}_0$. In this example all $14$ vertices are visited.
\psn
\parbox{16cm}{
{\includegraphics[height=8cm,width=.5\linewidth]{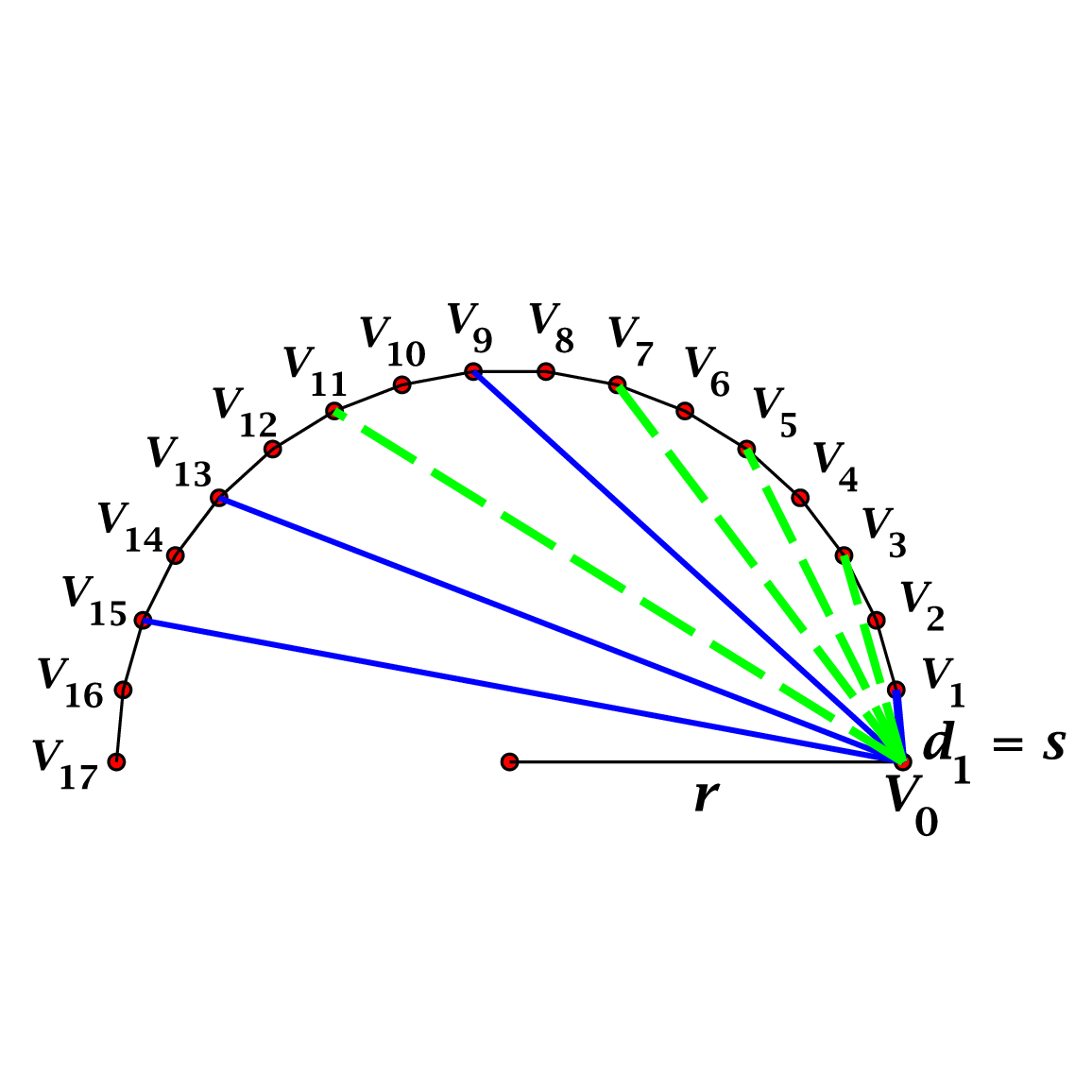}}
{\includegraphics[height=8cm,width=.5\linewidth]{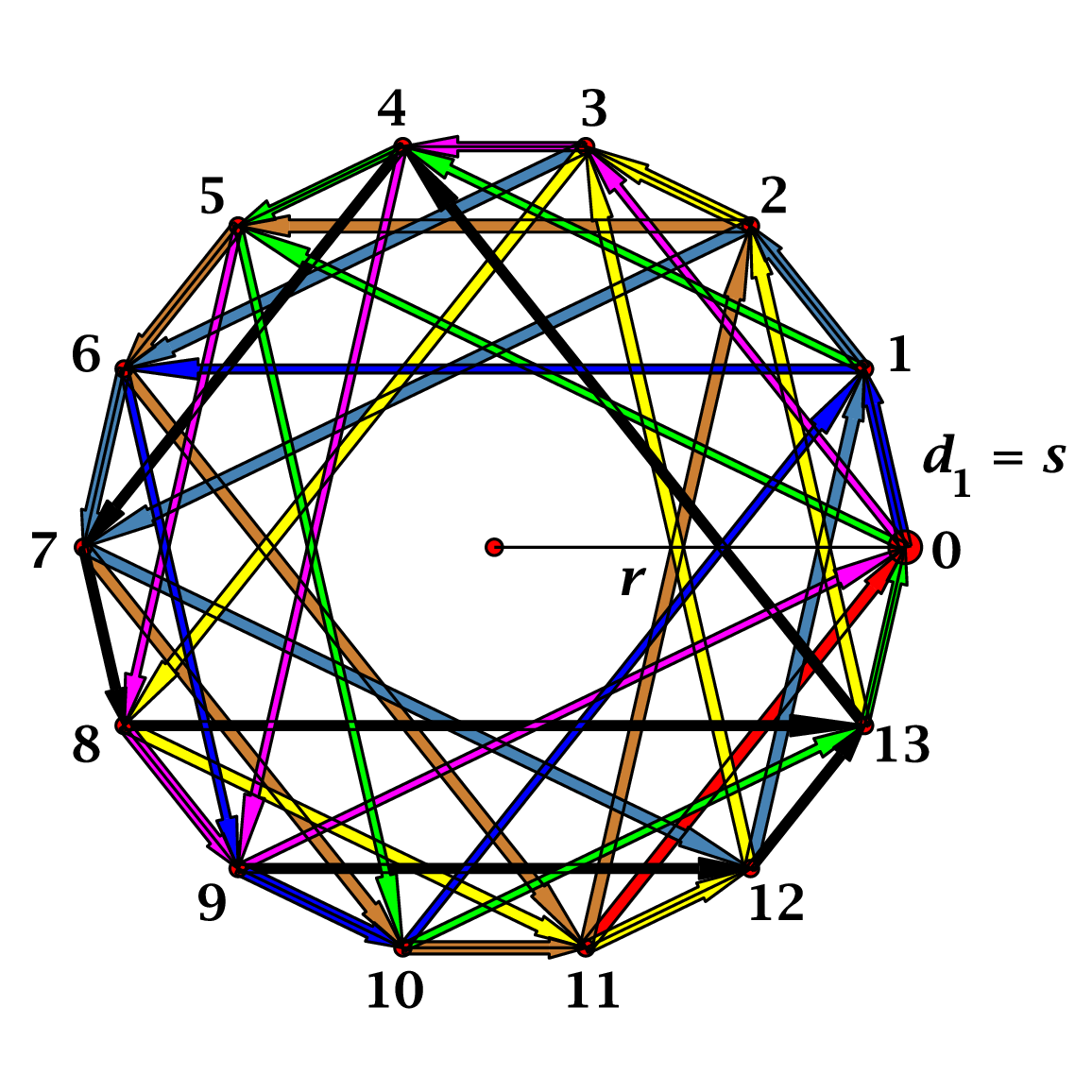}}
}
\psn 
\hskip 3.5cm  {\bf Figure 1}  \hskip 6cm  {\bf Figure 2}
\psn
{\bf Figure 1}: The diagonals in the $34$-gon for $b\sspeq 17$ for cycle $(1,\,15,\,13,\,9)$ (solid blue), and $(3,\,11,\,5,\,7)$ (dashed green).\psn
{\bf Figure 2}: The counter-clockwise {\sl Euler} tour for $b\sspeq 7$, with $14\cdot 3\sspeq 42$ arrows in the $14$-gon. Start at the vertex with label $0$. The color sequence for the arrows is blue, green, magenta, black, yellow, steel blue, and a final red arrow pointing back to label $0$. The colors are given in order to follow the trail without looking at the sequence of the $42$ vertex labels given later in section $2$. For an enlarged version see a link in \seqnum{A332439}.
\pbn 
Just for illustration, not for following the $68$ arrows in both cases in detail, see the {\it Figures 3} and {\it 4} for the two {\sl Euler} tours for $b\sspeq 17$. All $34$ vertices  are visited, in fact twice.
\psn
\parbox{16cm}{
{\includegraphics[height=8cm,width=.5\linewidth]{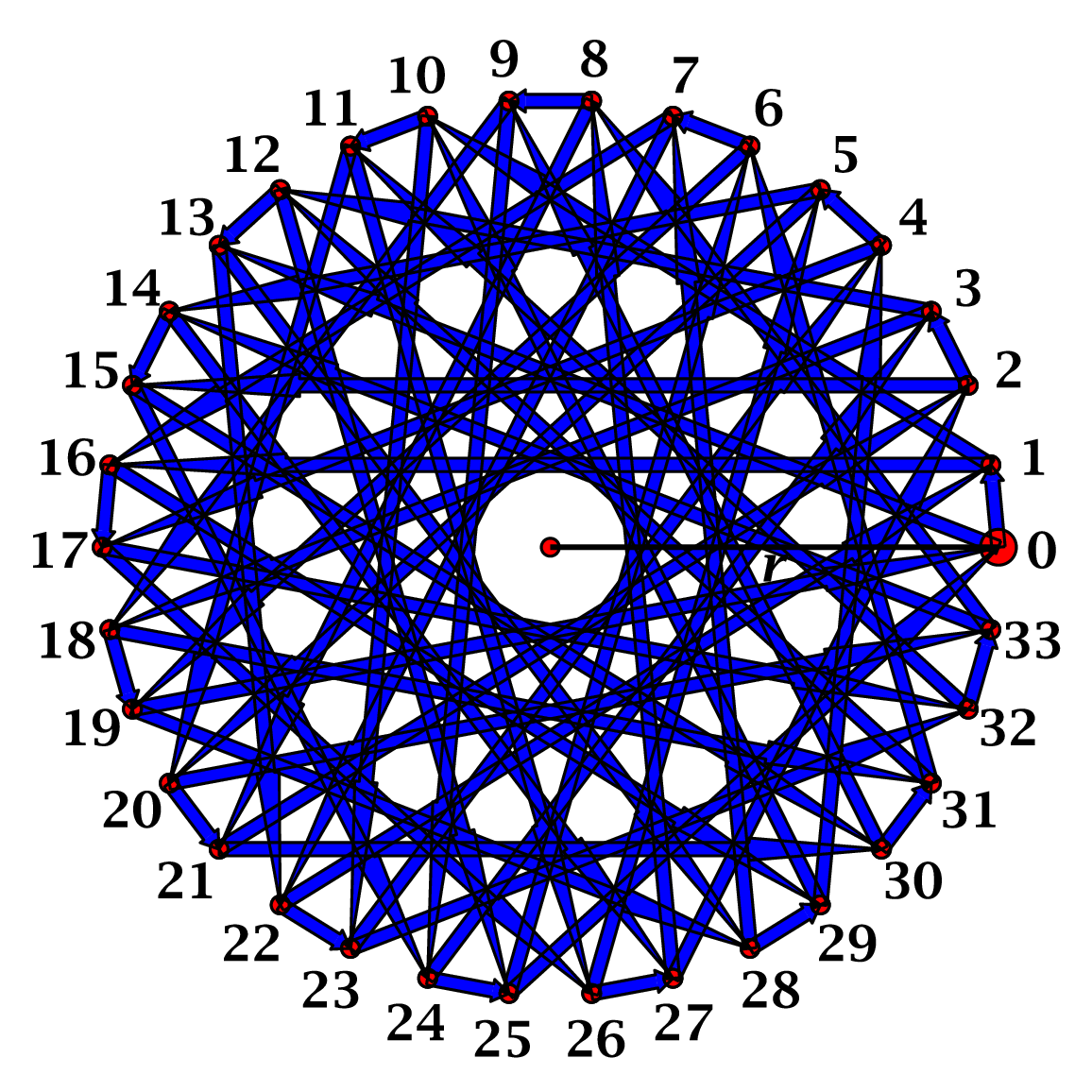}}
{\includegraphics[height=8cm,width=.5\linewidth]{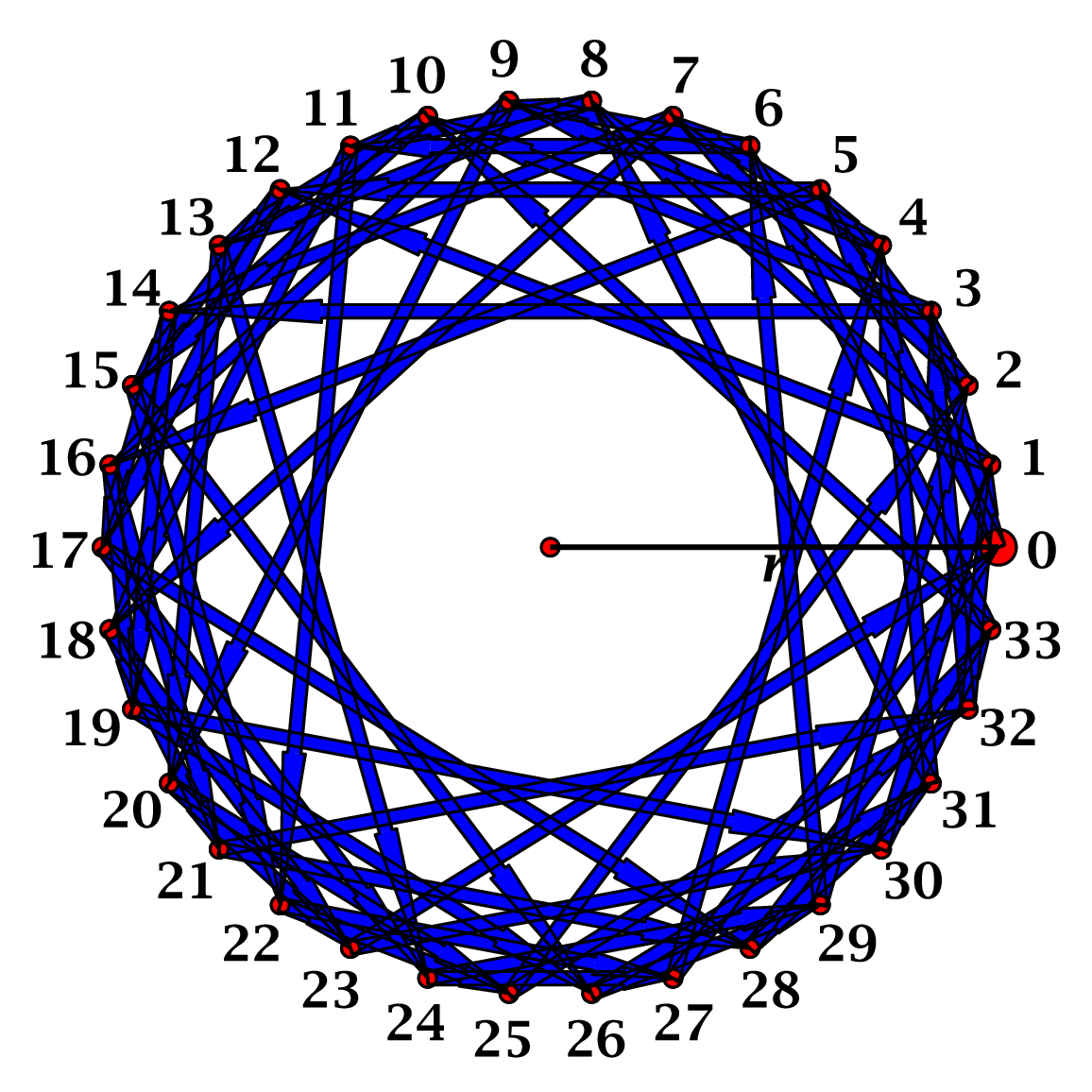}}
}
\psn 
\hskip 2.3cm  {\bf Figure 3}  \hskip 6.3cm  {\bf Figure 4}
\pbn
{\bf Figure 3}: $68$ arrows in the $34$-gon for the directed {\sl Euler} tour for $b\sspeq 17$ generated from the first cycle $(1,\,15,\,13,\,9)$ of diagonals.\psn
{\bf Figure 4}:  $68$ arrows in the $34$-gon for the {\sl Euler} tour for $b\sspeq 17$ generated from the second cycle  $(3,\,11,\,5,\,7)$ of diagonals.
\psn
\pbn 
The underlying digraph with $2\,b$ nodes (vertices) and $L(b,\, i)$ directed edges (arrows) which are trailed only once in the tour from, say, node $V^{2\,b}_0$, is simple (neither parallel arrows nor loops), but may not be regular (nodes may be of different order). Also, not all of the $2\,b$ nodes may be involved in the tour. If one keeps the remaining nodes one will have an unconnected digraph of connectivity number ${\cal N}(b)\sspm 1$. where the number of nodes of the {\sl Euler} tour is ${\cal N}(b)$, Alternatively, one can discard the remaining nodes in order to have a connected digraph. For example, for $b\sspeq 21$ with $B(21) = 1$ only $21$ nodes, not $42$, are involved in the tour, ${\cal N}(21)\sspeq 21$, and it is also an irregular graph. See {\it Figure 5}. 
\pbn
\hskip 4cm \parbox{16cm}{
{\includegraphics[height=8cm,width=.5\linewidth]{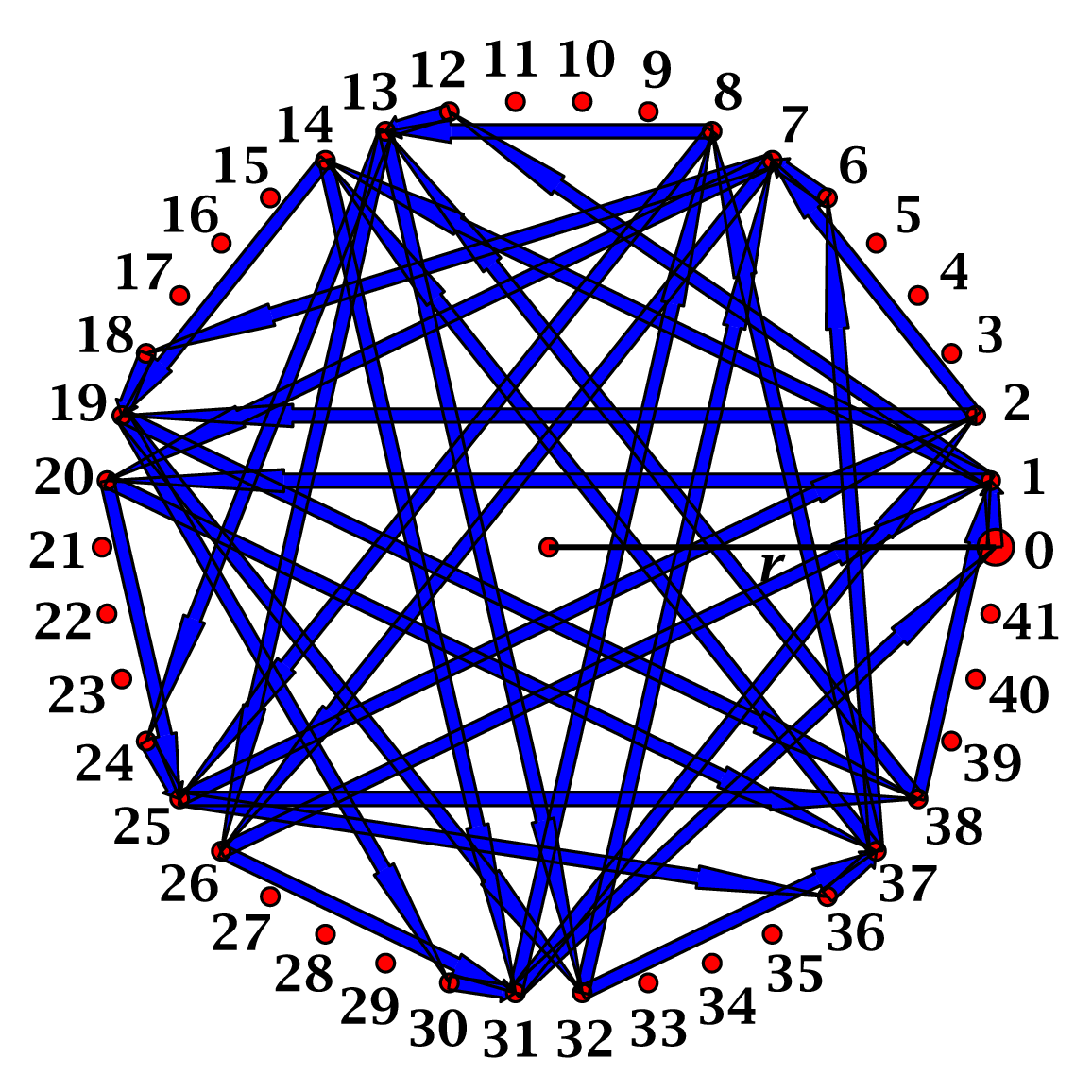}}
}
\psn
\hskip 7cm {\bf Figure 5}
\psn
{\bf Figure 5}: $42$ arrows in the $42$-gon for the directed {\sl Euler} tour for $b\sspeq 21$ generated from the cycle $(1,\,19,\,17,\,13,\,5,\,11)$. Only $21$ nodes are visited. The order of these nodes is $2,\,6,\,4$ for the labels $0,\,1,\, 2 \,(\text{mod}\,6)$, respectively.
\pbn
In the next section a theorem on the length of the primitive periods of these {\sl Euler} tours will be given. Then, in {\it Section 3}, the third complete system of cyclic sequences, the $MDS(b)$ system (for {\bf M}odified {\bf D}iagonal {\bf S}equences), for odd $b \sspgeq 3$, will be treated. This system is simpler than the mentioned ones, and will be proved to be equivalent to each of the other two systems in {\it Section 4}. A modified multiplicative modular relation will be employed, which has already been introduced in the {\sl Br\"andli} and {\sl Beyne} paper \cite{BB}. This modification of the doubling sequence with powers of $2$ has already been used in the proof of the quasi-order theorem in \cite{HP}, p. 103, \Eq{7.12}, and has also been proposed by {\sl Gary W. Adamson} in a comment, Aug 20 2019, to \seqnum{A003558}.
\pbn
\section{Schick sequences and Euler tours}
\psn
The cycles $SBB(b,\, i)$ for the $B(b) \sspeq$\seqnum{A135303}$((b-1)/2)$ different input values $q_0(i)$ ($B$ is identical with the coach number $c$), with period length $pes(b)\sspeq$\seqnum{A003558}$((b-1)/2)$ (identical with the quasi-order of $2$ modulo $b$ of the coach system, called $k(b)$), define by their odd elements length ratios diagonal/radius in a regular $(2\,b)$-gon, as explained above. Following a trail from, say, vertex $V^{(2\,b)}_0$ in the positive (counter-clockwise) sense, defined by a repetition of the diagonals according to the cycle numbers, leads, as will be proved in the theorem, to a positively oriented {\sl Euler} tour, named $ET(b, i)$, of length $L(b,\,i)$, with the given formula ~\ref{LengthEU}.
\psn
\begin{theorem}
\psn
\begin{enumerate}
\item The trail on a regular $(2\,b)$-gon, with odd $b\sspgeq 3$, starting at node (vertex) $V^{(2\,b)}_0$, with label $0$ and arrows (directed diagonals) following repeatedly the cycle $SBB(b,\, i)$ of length $pes(b)$, for any $i\sspin \{1,2,..., B(b)\}$, in the positive sense, will lead to a periodic sequence of node labels with  primitive length of the period $L(b,\,i)$. Therefore, this trail will be a counterclockwise oriented {\sl Euler} tour $ET(b, i)$.
\item The formula for the length of $ET(b, i)$ is
\Beq\label{LengthEU}
L(b,\,i) \sspeq \frac{2\,b\,pes(b)}{\gcd(SUM(SBB(b,\, i)),\, 2\,b)}\,\  ,
\Eeq
where $SUM(SBB(b,\, i))$ is the sum of the elements of the (unsigned) cycle $SBB(b,\, i)$.
\end{enumerate}
\end{theorem}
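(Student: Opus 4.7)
The plan is to reformulate the geometric trail in the language of modular arithmetic on $\mathbb{Z}/(2b)\mathbb{Z}$. I identify the vertex $V^{(2b)}_v$ with the residue $v \pmod{2b}$; then following a diagonal of label $q$ from a vertex $v$ in the positive (counter-clockwise) sense is exactly the map $v \mapsto v+q \pmod{2b}$. Consequently the sequence of vertex labels along the trail is the partial-sum sequence $s_n := \sum_{j=0}^{n-1} q_{j \bmod pes(b)} \pmod{2b}$ with $s_0 = 0$, and the two assertions of the theorem become statements about when $(s_n)$ first returns to $0$ with the diagonal index also reset, and when the induced sequence of directed arrows first repeats.

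For the period, I would show that $(s_n)$ is periodic with primitive period $L$ iff $s_{n+L} \equiv s_n \pmod{2b}$ for every $n$, equivalently, the sum of any $L$ consecutive terms of the periodic sequence $(q_{j \bmod pes(b)})$ vanishes mod $2b$. Since the elements of a single Schick cycle are pairwise distinct (each cycle is a single orbit in $RRSodd(b)$ under the map \eqref{BBRec}), two successive length-$L$ blocks differ by $q_{L \bmod pes(b)} - q_0 \pmod{2b}$, which can vanish only if $pes(b) \mid L$. Writing $L = m \cdot pes(b)$, the periodicity condition collapses to $m \cdot S \equiv 0 \pmod{2b}$, where $S := SUM(SBB(b,\,i))$, and the smallest positive solution is $m = 2b/\gcd(S,2b)$. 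This yields formula \eqref{LengthEU}.

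To promote this closed periodic trail to a genuine Euler tour I would rule out repeated directed arrows within the first $L$ steps. An arrow at step $n$ is determined by its starting vertex $s_{n-1}$ together with its displacement $q_{(n-1) \bmod pes(b)}$. Distinctness of labels inside a cycle forces two displacements to coincide only when $n' \equiv n \pmod{pes(b)}$, so writing $n' = n + m' \cdot pes(b)$ with $m' \geq 1$, the coincidence of starting vertices reduces to $m' S \equiv 0 \pmod{2b}$; by the minimality established above this forces $m' \geq 2b/\gcd(S,2b)$, hence $n' - n \geq L$, contradicting $1 \leq n < n' \leq L$. Therefore all $L$ arrows are pairwise distinct and the trail is a closed oriented Euler tour on the digraph it traces out.

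The main obstacle is the distinctness of the diagonal labels inside a single cycle $SBB(b,\,i)$; this is the one structural fact about Schick sequences that I need to invoke, and once it is in hand every remaining step is routine modular bookkeeping in $\mathbb{Z}/(2b)\mathbb{Z}$.
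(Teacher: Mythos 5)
Your reduction to modular arithmetic on $\Z/(2b)\Z$ and the linear congruence $m\,S\equiv 0 \pmod{2\,b}$ is the same core mechanism as the paper's proof, and your formula derivation agrees with it. However, you supply two verifications that the paper passes over, and both are genuinely needed. First, the paper only examines the trail at block boundaries (multiples of $pes(b)$) and finds the least $m$ with $m\,S\equiv 0\pmod{2\,b}$; it does not rule out a primitive period of the label sequence that is not a multiple of $pes(b)$. You close this by observing that a period $L$ forces $q_{L\bmod pes(b)}=q_0$, hence $pes(b)\mid L$ by distinctness of the elements within one Schick cycle (a fact which does hold: $\abs{b-2q}=\abs{b-2q'}$ with $q,q'$ odd and $b$ odd forces $q=q'$, so the orbit elements in one primitive period are pairwise distinct). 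Second, the paper asserts without argument that the periodic closed trail is an {\sl Euler} tour, i.e., that no directed arrow is repeated within one period; your argument that a repeated arrow at steps $n<n'$ forces $n'\equiv n \pmod{pes(b)}$ and then $n'-n\geq L$ by minimality is exactly the missing step. So your proposal is correct and follows the paper's route, but in a more complete form; the paper's version establishes periodicity and the length formula at block boundaries only, while yours also certifies primitivity of the period and the edge-distinctness that justifies the name {\sl Euler} tour.
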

\psn
\begin{proof}\pn
\begin{enumerate}
\item
Starting from the node (vertex) with label $0$ the trail visits, in order, the nodes with labels given by the partial sums of the infinite periodic sequence $SBB(b, \,i)_{\infty} \sspeq {\text {repeat}}(SBB(b,\, i))$, evaluated modulo ${2\,b}$. After the first block of length $pes(b)$, starting with $0$, the second block of this length starts with $SUM(SBB(b,\, i))$. Each new block starts then with a multiple of  $SUM(SBB(b,\, i))$. Therefore the trail will become a periodic Euler tour if after some $m\sspeq m(b,\,i)$ blocks the next block would begin again with $0$ modulo $2\,b$. This means that $ SUM(SBB(b,\, i))\,m\sspequiv 0 \pmod{2\,b}$. Because this linear congruence has always at least one solution periodicity is guaranteed.
\item
The length $L(b,\,i)$ of the {\sl Euler} tour is then $m(b,\,i)\,pes(b)$. The solution of a linear congruence is known (see \eg \cite{Apostol}, Ch. 5.3, pp. 110 - 113), and in the present case it is the trivial solution $m\sspeq 0$ if $g\sspdef \gcd(SUM(SBB(b,\, i)),\,2\,b)\sspeq 1$, and if $g\sspgeq 2$ there is besides the trivial solution at least one other solution in $\{1, 2, ..., 2\,b\sspm 1\}$. If $g\sspeq 1$ the second appearance of the $0$ at the beginning of a block will be after $2\,b$ blocks, and $L(b,\,i)\sspeq 2\,b\,pes(b,\,i)$. If $g\sspgeq 2$ the smallest non-trivial solution is $(2\,b)/g$, and for this $m$ value periodicity appears for the first time.  
\end{enumerate}
\end{proof}
\psn
The sum of numbers of the unsigned {\sl Schick} cycles $ SBB(2\,n+1,\,q_0\sspeq 1)$ in the case of B(2\,n+1) = \seqnum{A135303}$(n) \sspeq 1$ are given in \seqnum{A333848}(n), for $n\sspgeq 1$. The corresponding numbers $\gcd(SUM(SBB(b,\, 1)),\, 2\,b)$ are given in \seqnum{A333849}. 
\pn
The $b$ numbers with $B(b)\sspgeq 2$ are listed increasingly in \seqnum{A333855}, and their $B$ numbers are in \seqnum{A333853}. For these numbers $b\sspeq 2\,n\sspp 1$ the sums of the cycles $ SBB(b,\, i)$, for $i\sspin \{1, 2, ..., B(b)\}$, are given in the table \seqnum{A333850} (where $k$ is used instead of $i$). The corresponding $\gcd(  SUM(SBB(b,\, i)),\, 2\,(2\,n+1))$ values are given in table \seqnum{A333851}.
\psn
Note that the $\gcd(SUM(SBB(b,\, i)),\,2\,b)$ values are not independent of $i$. See \seqnum{A333851} for the first cases with $b \sspeq 65,\,133,\,...\, $.   
\psn
\begin{example}\label{Ex4} {\bf Euler tour $\bf{ET(7,\,1)}$} 
\psn
$b\sspeq 7,\, B\sspeq 1,\,pes(7)\sspeq 3$, $ SBB(7,\,1)\sspeq [1,\,5,\,3]$. $SUM(SBB(7,\,1))\sspeq 9$, $\gcd(9,\,2\cdot 7)\sspeq 1$, length \dstyle{L(7,\,1) \sspeq \frac{2\cdot7\cdot 3}{1}\sspeq 42}
\psn
$ ET(7,\,1)\sspeq [0,\,1,\,6,\,9,\,10,\,1,\,4,\,5,\,10,\,13,\,0,\,5,\,8,\,9,\,0,\,3,\,4,\,9,\,12,\,13,\,4,$ 
\pn
\hskip 2.45cm $7,\, 8,\,13,\,2,\,3,\,8,\,11,\,12,\,3,\,6,\,7,\,12,\,1,\,2,\,7,\,10,\,11,\,2,\,5,\,6,\,11]$\,. 
\psn
The corresponding digraph is shown in {\sl Figure 2}, and enlarged as a link in \seqnum{A332439}. There are $42$ arrows, and all $14$ nodes are visited thrice (a regular graph of order $6$). 
\end{example}
\section {\bf Interludium on Schick's cycles for primes}
\psn
Special attention is paid in {\sl Schick}'s book to prime $b$ (remember that he uses $p$ for odd numbers $\sspgeq 3$). The following theorem will show that \seqnum{A268923} gives in fact the odd prime numbers with $B\sspgeq 2$. This will be done by proving that the complement relative to the set of odd primes is \seqnum{A216371} (the odd primes with $B\sspeq 1$). In this proof the identity,  equation ~\ref{SchickId} is used, and $2\,pes(prime(n)) \sspeq order(2,\,3\,prime(n))$, for $n\sspgeq 2$, is derived.  
\psn
\begin{theorem}
\psn
\begin{enumerate}
\item \label{order} $2\cdot order(4, prime(n)) \sspeq order(2,\,3\,prime(n))$, \ for \  $n \sspgeq 2$.
\item \label{pesprime} $pes(prime(n)) \sspeq order(4, prime(n))$,\ for \  $n \sspgeq 2$.
\item \label{Th2.3} The set of all odd primes $p$ such that all odd primes $q$, with $q\sspkl p$, satisfy \pn
\dstyle{\frac{\varphi(p\,q)}{2}\, {\Big /} \,order(2,\,p\,q) \sspgr 1} is equal to the set of odd primes satisfying $B(prime) \sspgeq 2$.
\end{enumerate}
\end{theorem}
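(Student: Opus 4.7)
The plan is to argue the three items in the stated order, since items~1 and~2 feed directly into item~3. Write $d_p := \operatorname{ord}_p(2)$. For item~1, for an odd prime $p \neq 3$ (so that $\gcd(3,p) = 1$), the Chinese Remainder Theorem yields $\operatorname{ord}_{3p}(2) = \operatorname{lcm}(\operatorname{ord}_3(2), d_p) = \operatorname{lcm}(2, d_p)$. A one-line parity check on $d_p$ compares this to $2\,\operatorname{ord}_p(4) = 2d_p/\gcd(d_p,2)$: if $d_p$ is odd, both sides equal $2d_p$; if $d_p$ is even, both equal $d_p$, finishing the identity. For item~2, Section~1 already identifies $pes(p)$ with the quasi-order $k(p)$ of $2$ modulo $p$, that is, the smallest positive $k$ with $2^k \equiv \pm 1 \pmod p$. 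Squaring this congruence gives the equivalent condition $4^k \equiv 1 \pmod p$, and the smallest positive such $k$ is $\operatorname{ord}_p(4)$ by definition; hence $pes(p) = k(p) = \operatorname{ord}_p(4)$.

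For item~3, combining items~1 and~2 with the Schick identity~\ref{SchickId} produces the clean formula, valid for any odd prime $p \neq 3$,
\[
\frac{\varphi(3p)/2}{\operatorname{ord}_{3p}(2)} \;=\; \frac{p - 1}{2\,pes(p)} \;=\; B(p),
\]
because $\varphi(3p) = 2(p-1)$ and $\operatorname{ord}_{3p}(2) = 2\,\operatorname{ord}_p(4) = 2\,pes(p)$. Hence the $q = 3$ instance of the inequality $\varphi(pq)/(2\operatorname{ord}_{pq}(2)) > 1$ reads literally $B(p) > 1$, i.e., $B(p) \geq 2$. This immediately gives the forward direction of the equivalence: if the inequality holds for every odd prime $q < p$, specialize to $q = 3$ to obtain $B(p) \geq 2$.

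For the converse direction, $\operatorname{ord}_{pq}(2) = \operatorname{lcm}(d_p, d_q)$ by CRT, so setting $c_r := (r-1)/d_r$ (a positive integer by Fermat) one rewrites
\[
\frac{\varphi(pq)/2}{\operatorname{ord}_{pq}(2)} \;=\; \frac{(p-1)(q-1)\gcd(d_p, d_q)}{2 d_p d_q} \;=\; \frac{c_p\, c_q\,\gcd(d_p,d_q)}{2}.
\]
Item~2 rewrites $B(p)$ as $c_p\gcd(2, d_p)/2$, so $B(p) \geq 2$ forces $c_p \geq 4$ when $d_p$ is odd and $c_p \geq 2$ when $d_p$ is even. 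When $d_p$ is odd, the product $c_p c_q \gcd(d_p,d_q) \geq 4 > 2$ already. When $d_p$ is even, split on the parity of $d_q$: either $d_q$ is also even and $\gcd(d_p,d_q) \geq 2$, or $d_q$ is odd and hence a proper divisor of the even number $q - 1$, forcing $c_q \geq 2$; either subcase yields $c_p c_q \gcd(d_p,d_q) \geq 4 > 2$. The main obstacle is tracking the factor-of-$2$ ambiguity between the quasi-order $k(p)$ and $d_p$---and hence between $B(p)$ and $c_p$---throughout; once this is condensed into the identity $B(p) = c_p\gcd(2, d_p)/2$ and one notes that an odd $d_q$ is necessarily a proper divisor of $q-1$, the remaining arithmetic is routine.
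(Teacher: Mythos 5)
Your proposal is correct, and on the decisive point it is more complete than the paper's own argument. For item~1 you use the CRT factorization $\operatorname{ord}_{3p}(2)=\operatorname{lcm}(2,d_p)$ together with $\operatorname{ord}_p(4)=d_p/\gcd(2,d_p)$ and a parity split; the paper instead shows directly that $3p$ divides $2^{2\cdot order(4,p)}-1$ (extracting the factor $3$ by the binomial theorem) and only asserts the minimality of that exponent, which your lcm formulation delivers automatically. Item~2 is essentially the same argument in both places --- note only that your word ``equivalent'' after squaring silently uses primality of $p$ to pass from $4^k\equiv 1$ back to $2^k\equiv\pm1$, a step the paper spells out. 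The genuine divergence is item~3. The paper specializes to $q=3$, identifies the $q=3$ ratio with $B(p)$ via Schick's identity, and invokes Nagell's refinement of Euler--Fermat to get the ratio $\geq 1$; but it never verifies that when $B(p)\geq 2$ the ratio exceeds $1$ for \emph{every} odd prime $q<p$, which the universally quantified statement demands. Your identity $\varphi(pq)/(2\operatorname{ord}_{pq}(2))=c_p\,c_q\gcd(d_p,d_q)/2$, combined with $B(p)=c_p\gcd(2,d_p)/2$ and the case analysis on the parities of $d_p$ and $d_q$ (using that an odd $d_q$ is a proper divisor of $q-1$), closes exactly that gap, and it does so without needing Nagell's theorem at all. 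The one loose end you share with the paper is the boundary prime $p=3$, for which the quantification over $q<p$ is vacuous while $B(3)=1$; it has to be excluded, consistent with your restriction $p\neq 3$ in item~1.
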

\psn
\begin{proof}
\pn
\begin{enumerate}
\item 
It will be proved for odd primes $p$ that $2^{2 \cdot order(4,\, p)} \sspm 1\sspeq 3\,k\,p$, with integer $k$, and that this exponent is the least one $\sspgeq 1$. Clearly $4^{order(4,\, p)} - 1 \sspeq k'\,p$ with some integer $k'$ by definition of $order(4, p)$, and this exponent is the least one $\sspgeq 1$. Also,  $2^{2\,n} - 1 \sspeq (1\sspp 3)^n - 1 \sspequiv 0 \pmod{3}$, certainly for any positive $n$, by the binomial theorem. Hence $k'\sspeq 3\,k$. See \seqnum{A082654}$(n) \sspeq order(4, prime(n))$, for $n\sspgeq 2$.     
\item 
The {\sl Jonathan Skowera} formula, Jun 29 2013, in \seqnum{A003558} shows that \seqnum{A003558}$(\hat n)\sspeq pes(2\,\hat n +1)$, for $\hat n\sspgeq 1$. Hence for $prime(n)\sspeq 2\,\hat n \sspp 1$, with $\hat n\sspeq \hat n(n)$, one has $pes(prime(n)) \sspeq$\seqnum{A003558}$(\hat n(n))$, for $ n \sspgeq 2$.\pn 
The direction $\Rightarrow$ is then trivial from the definition of \seqnum{A003558}$(\hat n(n))$ by just squaring.\pn
The direction $\Leftarrow$ with \seqnum{A082654}$(n)$, for $n\sspgeq 2$, 
means that $(2^{k(n)}\sspp 1)\,(2^{k(n)}\sspm 1))\sspequiv 0 \pmod{prime(n)}$. But this implies $prime(n)$ divides one of the factors, hence $2^{k(n)}\sspequiv\pm 1 \pmod{prime(n)}$, for $n\sspgeq 2$, with minimal $k(n) \sspgr 1$; this is the definition from \seqnum{A003558}.
\item
It is proved that the complement of \seqnum{A268923} relative to the set of odd primes is \seqnum{A216371}. For the complement one tries to find all odd primes $p$ such that an odd prime $q\sspkl p$ exists with \dstyle{\frac{\varphi(p\,q)/2}{order(2,\,p\,q)} \sspeq 1}.
\pn
As such a prime $q\sspeq 3$ is chosen. From the {\sl Schick}'s theorem eq.~\ref{SchickId} one knows that \dstyle{B(p) \sspeq \frac{p\sspm 1}{2\,pes(p)}}, for $p\sspgeq 3$. With parts {\it 1} and {\it 2}, just proved, this becomes \dstyle{B(p) \sspeq \frac{p\sspm 1}{order(2,\,3\,p)}}.\pn   
In addition a corollary of {\it Theorem} 64,\, p. 106, by {\sl Nagell} \cite{Nagell} (or the Theorem 10.8, p. 211,  of {\sl Apostol} \cite{Apostol}) is needed, namely the improvement of the {\sl Euler}-{\sl Fermat} theorem for certain non-exceptional numbers (here $p\,q$, with $q\sspkl p$): $2^{\varphi(p\,q)/2}\sspequiv 1 \pmod{p\,q}$, where $\varphi\sspeq$\seqnum{A000010}, {\sl Euler}'s totient function. It is then clear that $order(2,\, p\,q)$ has to divide $\varphi(p\,q)/2$. For $q\sspeq 3$ this becomes \dstyle{\frac{p\sspm 1}{order(2,\,3\, p)} \sspgeq 1} for all $p\sspgeq 5$.
\pn 
From application of {\sl Schick}'s theorem \Eq{6} it is clear for which odd primes $\sspgeq 5$ equality holds, namely for those with $B(p)\sspeq 1$. Hence the complement of \seqnum{A216371} is $3$, and the primes $p$ with $B(p)\sspeq 1$, which is \seqnum{A216371}. Therefore \seqnum{A268923} gives the odd primes $p$ with $B(p)\sspgeq 2$. 
\end{enumerate}
\end{proof}
\section{Complete cycle system of modified modular doubling sequences $\bf MDS(b)$}
\psn
{\bf A) Modified modular congruence $\bf \modstar{n}$ }
\psn
This modified modular congruence considered by {\sl Br\"andli} and {\sl Beyne} can be  formulated based on ordinary reduced residue systems modulo $n$, whose smallest non-negative version is given by the set named RRS(n) shown in \seqnum{A038566} (but with  ${0}$ for $n\sspeq 1$). $RRS(n)\sspeq \{r_1,\, r_2,\,...\,r_{\varphi(n)}\}$, with $r_j$ representing the residue class $\repjn{r_j}{\,n}$, and $ \varphi(n)\sspeq \#RRS(n)\sspeq$\seqnum{A000010}$(n)$, for $n\sspin \N $. \Eg $RRS(4)\sspeq \{1,\,3\}$, with representative $\repjn{1}{4}$ consisting of all integers $a\sspeq 1\sspp k\,4$, with $k\sspin \Z$, \ie \, $a\sspequiv \amodn{1}{4}$, and similarly for $\repjn{3}{4}$. Thus $RRS(4)$ represents all odd integers (the union of the sets congruent to $1$ and $3$ modulo $4$).
\begin{definition} \label{modstar}
The smallest non-negative reduced residue system $\text{mod}^*\,n$ is denoted by $RRS^*(n)$, and this is $RRS^*(1)\sspeq \{0\}$, $RRS^*(2)\sspeq \{1\}$, and $RRS^*(n)$ consists of the elements of the first half of $RRS(n)$, for $n\sspgeq 3$.  \pn
Each element $r^*_j\sspin RRS^*(n)$ represents the reduced residue class denoted by $\repstjn{r_j}{\,n}$, and these classes are given in terms of ordinary ones by 
\Beqarray
 &&\repstjn{0}{1}\sspeq \repjn{0}{1}\,\ \ \  \repstjn{1}{2}\sspeq \repjn{1}{2},\, \nonumber \\
 && \repstjn{r_j}{n}\sspdef   \{\repjn{r_j}{n}\}\, \union\, \{\repjn{(-r_j)}{n}\},\ \ {\text {for}} \ \ j \sspeq \range{1}{\frac{\varphi(n)}{2}}, \ \  \text{and}\ \ n\sspgeq 3\,.
\Eeqarray
\end{definition}
\pn
The number of elements of $RRS^*(n)$ is $\#RRS^*(n)\sspfed \sigma(n)\sspeq $\seqnum{A023022}$(n)$, but with extra \seqnum{A023022}$(1) \sspeq 1$. Note that the $\text{mod}^*$ residue classes represent together the same set of integers as the ones of $RRS(n)$, but $RRS^*(n)\sspeq RRS(n)$ only for $n\sspeq 1$ and $n\sspeq 2$. \psn
\Eg, $n\sspeq 4$: $RRS^*(4)\sspeq \{1\}$, and the reduced residue class $\repstjn{1}{4}$ consists of the union of the ordinary reduced residue classes $\repjn{1}{4}$  and $\repjn{3}{4}$, hence $RRS^*(4)\sspneq RRS(4)$, but represents the same integers as $RRS(4)$, namely the odd ones (see above). \psn
It is elementary that the elements of $RRS^*(2\,m)$ are only odd (and reduced) integers, and $RRS^*(2^m)$ represents all odd numbers, for each $m\sspin \N$.\psn
If in $RRS^*(2\,m\sspm 1)$, with  $m\sspin \N$, all even integers are discarded, the remaining subsets of the residue classes could be called $RRS^*odd(2\,m\sspm 1)$. But it is clear that $RRS^*odd(2\,m\sspm 1)$ represents the same odd integers as $RRS^*(2\,(2\,m\sspm 1))$. 
\psn
The equivalence relation $a \sspsimstarn{n}\, b$ for integer numbers $a$ and $b$, coprime to $n\sspin \N$, is defined by $a\sspin \repstjn{r_j}{\,n}$ and $b\sspin \repstjn{r_j}{\,n}$, with $r^*_j\sspin RRS^*(n)$, for some $j\sspin \{\range{1}{\sigma(n)}\}$. This is based on the fact that the restricted residue classes ${\text mod}^*n$ are pairwise  disjoint and cover the integers coprime to $n$. 
\psn 
The notation $\modstaran{a}{n}$ gives $r^*_j\sspin RRS^*(n)$, if $a\sspin \repstjn{r_j}{\,n}$.\pn
It is defined by
\Beq
\modstaran{a}{n}\sspdef \Cases2{\modan{a}{n}}{${\text if}\ \modan{a}{n}\sspleq \frac{n}{2}$\,,}{\modan{-a}{n}}{${\text if}\ \modan{a}{n} \sspgr \frac{n}{2}$\,,}
\Eeq
where $\modan{a}{n}\sspin \{\range{0}{n-1}\}$ ($0$ can appear only for $n\sspeq 1$ because $a$ is reduced).
\psn 
\begin{example}\label{Ex5}
\pn
\begin{enumerate} 
\item\  $\modstaran{17}{9}\sspeq 1\ \text{because}\ \ \modan{17}{9}\sspeq 8\ \ \text {and} \ \  \modan{-8}{9}\sspeq 1.$ \nonumber 
\item \ \ $\modstaran{4}{9}\sspeq 4\ \text{because}\ \ \modan{4}{9}\sspeq 4 \sspkl 4.5\,.$\nonumber 
\item \ \ $\modstaran{5}{9}\sspeq 4\ \text{because}\ \ \modan{5}{9}\sspeq 5 \sspgr 4.5\,.$\nonumber  
\end{enumerate}
\end{example}
\pn
Because $\text{mod}^*\, n$ uses numbers coprime to $n$ it is not additive for $n\sspgeq 2$, contrary to $\text{mod}\,n $. If $\gcd(a,\,n)\sspeq 1\sspeq \gcd(b,\,n)$ then $\gcd(a\sspp b,\,n)\sspneq 1$ in general if $n \sspgeq 2$. Because $\modstaran{a}{n}\sspeq \modstaran{-a\sspp n}{n}$, even if $\gcd(a\sspp b,\,n)\sspeq 1$ (\eg $n\sspeq 9$, $a\sspeq 1,\, b \sspeq 4$) $\text{mod}^*$ cannot be well defined (\eg $4\sspsimstarn{9}\, 5$, and $\gcd(1\sspp 5,\,9)\sspeq 3$). 
\psn
However, $\text{mod}^*$, like $\text{mod}$, is multiplicative. This is based on the lemma that if  $\gcd(a,\, n)\sspeq 1$ and $\gcd(b,\, n)\sspeq 1$ then $\gcd(a\, b,\, n)\sspeq 1$ (see, \eg the simple proofs in \cite{Burton}, p. 30, Exercise 16 (a),\,or \cite{NZ}, p. 8, Satz 1.8). Also, $\gcd(a\,(-a\sspp n),\,n)\sspeq 1$ provided $\gcd(a,\,n)\sspeq 1$.
\pn
That each reduced residue system  $\text{mod}^*n$, with $\sigma(n) \sspgeq 2$, forms an {\sl Abel}ian multiplicative group of degree $\sigma(n)$ follows from $\text{mod}\,n$ properties. The identity element is the class congruent to $\repstjn{1}{\,n}$. If the system $RRS^*(n)$ is used this is the element $1$. That a unique inverse element exists is based on a theorem on the unique solution of the linear congruence $a\,x\sspequiv \amodn{b}{n}$ if $\gcd(a,\,n)\sspeq 1$ (see, \eg \cite{NZ}, p. 41, Satz 2.13, \cite{Apostol}, p. 114, Theorem 5.20), which generalizes to the $\text{mod}^*$ case. The cases with  $\sigma(n)\sspeq 1$  are trivial one element groups.      
\psn
In \cite{BB} this multiplicative group of representatives of $RRS^*(n)$ has been considered and named $G^*_n$. For $\sigma(n)\sspgeq 2$ this is a direct product of cyclic groups $C_k$ of order k, because it is a finite Abelian group (see \eg \cite{Speiser}, Satz 43 and Satz 42 p. 49 and 47). 
\psn
If $n$ is a product of two odd primes the smallest noncyclic group appears for $n\sspeq 65\sspeq 5\cdot 13$ with the cycle structure $12_2\,6_2$ (meaning $2$ cycles of length $12$ and also of length $6$, not regarding subcycles), namely $(3,\, 9,\, 27,\, 16,\, 17,\, 14,\, 23,\, 4,\,12,\, 29,\, 22,\, 1)$, $(11,\, 9,\, 31,\, 16,\, 19,\, 14,\, 24,\, 4,\,$\pn
$ 21,\, 29,\, 6,\,1)$, $(2,\, 4,\, 8,\, 16,\, 32,\, 1)$ and $(7,\, 16,\, 18,\, 4,\, 28,\, 1)$. See \cite{BB}, Theorem 3, p. 10, for the criterion on cyclic $\text{mod}^*$ groups for odd $n$. $G^*_{65}$ is the group $C_4\times C_3\times C_2$. For the cycle graph see \cite{WL}, Figure 4, the 7th plot, where cycle groups are called $Z_k$ (as a side remark note that the modified modular congruence $\text{Modd}$ used there in the context of regular $n$-gons and diagonal/side ratios is related to a set of representatives, called there ${\cal M}(n)$ (\Eq{65}) that is different from $RSS^*(n)$ considered here).
\psn   
In a further paper {\sl Br\"andli} and {\sl Beyne}, with two collaborators \cite{BBLL}, studied polynomials which have as {\sl Galois} group $G^*_n$, now called $\Z^{*/2}_n \sspeq \Z^*_n/\langle n-1\rangle$. They call these polynomials $\Psi^{re}_n(x)$ ({\it re} stands for reduced), and give the zeros in terms of \dstyle{s_k(n)\sspdef 2\,\cos\left(\frac{2\,\pi\,k}{n}\right)}, for $k$ values from $RRS^*(n)$, and  $n\sspgeq 1$. This can be rewritten in terms of the algebraic number $\rho(n)\sspdef 2\,\cos(\pi/n)$ of degree $\delta(n) \sspeq$\seqnum{A055034}$(n)$, which is fundamental for regular $n-$gons (see \cite{WL}), as $s_k(n)\sspeq R(2\,k,\,\rho(n))$, with the monic {\sl Chebyshev} polynomials $R$ shown in \seqnum{A127672}. The minimal polynomials of $\rho(n)$, called $C(n,x)$ in \cite{WL} (see also Table 2 there) are used in order calculate the {\sl Galois} polynomials $\Psi^{re}_n(x)$ modulo $C(n)$ (in order to reduce powers of $\rho(n)$ with exponents larger than $\delta(n)\sspm 1$).\pn
These polynomials are then (the use of $C(n,\,x\sspeq\rho(n))$ is indicated by the vertical bar)  
\Beq
\Psi^{re}_n(x)\sspeq \prod_{j=1}^{\sigma(n)} \left(x \sspm  R(2\,RRS^*(n)_j,\,\rho(n))\right) \Big |_{C(n,\,\rho(n))\sspeq 0}\,,\ \ \text{for}\ \ n\sspin \N\,.
\Eeq
It is not surprising that these are the minimal polynomials of the algebraic number \pn 
\dstyle{2\,\cos\left(\frac{2\,\pi}{n}\right)\sspeq R(2,\,\rho(n))\sspeq \rho(n)^2\sspm 2} of degree $\sigma(n)$, and they are given in \seqnum{A232624}, where they are named $MPR2(n,\,x)$.
\psn
\begin{example}\label{Ex6}
\Beqarray
&&\Psi^{re}_{65}(x)\sspeq MPR2(65,\,x)\sspeq 1\,-\,12\,x\,-\,180\,x^2\,-\,101\,x^3\,+\,2085\,x^4\,+\,1802\,x^5\,-\,9126\,x^6\, \nonumber \\
&&-\,7168\,x^7\,+\,20886\,x^8+\,13653\,x^9\,-\,28667\,x^{10}\,-1\,5001\,x^{11}\,+\,25284\,x^{12}  +\,10282\,x^{13}\, \nonumber \\
&&-\,14822\,x^{14}\,-\,4540\,x^{15}\,+\,5832\,x^{16}+\,1292\,x^{17}\,-\,1521\,x^{18}\,-\,229\,x^{19}\,+\,252\,x^{20}\,+\,23\,x^{21}\, \nonumber \\
&&-\,24\,x^{22}\,-\,x^{23}\,+\,x^{24}\,.
\Eeqarray
\end{example}
\pbn
{\bf B) Complete $\bf MDS(b)$ system}
\psn
Instead of the two lines  $A(b,i)$  and $K(b,i)$, with $b\sspeq 2\,n\sspp 1$, $n\sspin \N$, $i\sspin \{\range{1}{c(b)}\}$ and length $r(b.\,i)$ in the complete coach system $\Sigma(b)$ of {\sl Hilton} and {\sl Pedersen} \cite{HP} from section {\bf A)} of the {\it Introduction}, and the periodic positive integer sequence $\{q(b,\,i)_j\}$  with $i\sspin \{\range{1}{B(b)}\}$ and $j\sspin \{\range{1}{pes(b}\}$ in the complete system $SBB(b)$ by {\sl Schick} \cite{Schick} and {\sl Br\"andli}-{\sl Beyne} \cite{BB} from section {\bf B) i)} and {\it ii)} of the {\it Introduction}, periodic doubling sequences are used, namely $\{\modstaran{a(b,\,i)\,2^j}{b}\}$, with certain odd numbers $a(b,\,i)$ coprime to $b$, where $i\sspin \{\range{1}{c^*(b)}\}$ and $j\sspin \{\range{1}{P(b)}\}$.\pn
It will turn out that $c^*(b) \sspeq c(b)\sspeq B(b)$, and the length of the period $P(b)$, independent of $i$, is identical with $k(b)\sspeq pes(b)$.\pn
The short abbreviation $MDS$ is used instead of $MMDS$, for Modified Modular Doubling Sequence. 
\begin{definition} \label{MDS}
\pn
\begin{enumerate}
\item A doubling sequence $DSseq(b,\,i)$, for $i\sspin\{\range{1}{c^*(b)}\}$, is defined by \pn
$\{mod(a(b,\,i)\,2^j,\,b)\}_{j\sspgeq 1}$, for $b\sspeq 2\,n\sspp 1$, with $n\sspin \N$, and some odd element $a(b,\,i)\sspin RRS^*(b)$.\psn
\item A modified modular doubling sequence $MDSseq(b,\,i)$, for $i\sspin\{\range{1}{c^*(b)}\}$, is defined by $\{\modstaran{a(b,\,i)\,2^j}{b}\}_{j\sspgeq 1}$, for $b\sspeq 2\,n\sspp 1$, with $n\sspin \N$, and some odd element $a(b,\,i)\sspin RRS^*(b)$.
\end{enumerate}
\end{definition}
\pn
Note that the input $a(b,i)$ appears not as first number (for $j\sspeq 1$) of these sequences. In the $MDSseq(b,\,i)$ case it will be shown to appear at the end of the first period (as it would also in the other sequence with a doubled length of the period). \psn
The odd members of $RRS^*(b)$ are denoted by $RRS^*odd(b)$.  $\#RRS^*odd(b)\sspeq$\seqnum{A332435}$((b-1)/2)$, and $\#RRS^*even(b)\sspeq$\seqnum{A332436}$((b-1)/2)$. 
\pn
\begin{example} \label{Ex7}
 $RRS^*odd(33)\sspeq \{1,\, 5,\, 7,\, 13\}$,\, $MDSseq(33,\,1)\sspeq \{\text{repeat}(2,\,4,\,8,\, 16,\,1)\}$,\pn
 $MDSseq(33,\,2)\sspeq \{\text{repeat}(10,\,13,\,7,\, 14,\,5)\}$\,.
\end{example}
\pn
The two sequences of the example are periodic with the same length of period, \ie  $P(33,\, 1)\sspeq  P(33,\, 2)\sspeq 5$. 
\pn In general one starts with input $a(b,\, 1) \sspeq 1$, then if not all odd numbers from $RRS^*odd(b)$ are present in the period of the sequence one takes as next input, $a(b, 2)$, the smallest missing number in $RRS^*odd(b)$, in this example $5$, \etc, until all odd numbers from  $RRS^*odd(b)$ are present, and then the system $MDS(b)$ of $c^*(b)$ sequences is complete. In the example only two sequences are needed ($c^*(33)\sspeq 2$). This procedure is similar to the one of $SBB(b)$ systems with the different initial values for $q_0(b,\,i)$.\psn
After it has been proved that $MDSseq(b,\,i)$ is always periodic with the same period length for different $i$, hence called $P(b)$, the primitive period will be denoted by $MDS(b,\,i)$.
\begin{proposition} \label{MDSth}
{\bf Complete $\bf MDS(b)$ system}
\pn
\begin{enumerate}
\item Each sequence $MDSseq(b,\,i)$ is periodic, and the length of the primitive period  $MDS(b,\,i)$ is independent of any input $a(b,\,i)\sspin RRS^*odd(b)$, for $b\sspgeq 3$. This length of the period will be called $P(b)$.
\item $P(b)\sspeq k(b)\sspeq pes(b)\sspeq$\seqnum{A003558}$(b)$. 
\item $MDS(b,\,i)_{P(b)}\sspeq a(b,i)$.
\item \dstyle{\bigcup_{i=1}^{c^*(b)} MDS(b,\,i)\sspeq RRS^*(b)}.
\item $c^*(b)\,P(b) \sspeq \frac{\varphi(b)}{2}$. Hence $c^*(b)\sspeq c(b)\sspeq  B(b)\sspeq$\seqnum{A135303}$(b)$, for $b\sspgeq 3$. 
\end{enumerate}
\end{proposition}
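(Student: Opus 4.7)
The plan is to reinterpret the defining recurrence of $MDSseq(b,\,i)$ as iteration of multiplication by the class of $2$ in the finite Abelian group $G^*_b$, so that every orbit becomes a coset of the cyclic subgroup $\langle \overline{2}^{\,*}\rangle$; the five statements then reduce to coset arithmetic combined with one parity lemma and the identity $k(b)\sspeq pes(b)$ recalled in the Introduction. Concretely, the map $T_b\colon RRS^*(b)\sspto RRS^*(b)$, $T_b(x)\sspeq \modstaran{2\,x}{b}$, is well-defined (coprimality is preserved) and bijective (in $G^*_b$ it is left multiplication by $\overline{2}^{\,*}$, which is invertible), so every trajectory is purely periodic and the orbits partition $RRS^*(b)$ into cosets $\overline{a}^{\,*}\langle\overline{2}^{\,*}\rangle$. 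All cosets have the same cardinality, namely $P(b)\sspdef \text{ord}_{G^*_b}(\overline{2}^{\,*})$, which proves (1). For (2) I would use that $(\overline{2}^{\,*})^{k}\sspeq \overline{1}^{\,*}$ is equivalent to $2^{k}\sspequiv \pm 1\pmod b$, and by the quasi-order theorem~(\ref{quoth}) the least such $k\sspgeq 1$ is precisely $k(b)\sspeq pes(b)$. For (3), after $P(b)$ doublings $a(b,\,i)\cdot 2^{P(b)}\sspequiv \pm a(b,\,i)\pmod b$, and because $a(b,\,i)\sspin RRS^*odd(b)\subseteq\{1,\ldots,(b-1)/2\}$, the $\text{mod}^*$-representative of $\pm a(b,\,i)$ is $a(b,\,i)$ itself.

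For (4) and (5) the task is to show that the greedy procedure of taking the smallest unused odd input exhausts the complete system. This reduces to a parity lemma: every $T_b$-orbit meets $RRS^*odd(b)$. I would prove it by contradiction. For any $x\sspin RRS^*(b)$ one has $T_b(x)\sspeq 2\,x$ if $x\sspleq (b-1)/4$ (hence even), and $T_b(x)\sspeq b\sspm 2\,x$ if $x\sspgr (b-1)/4$ (hence odd, because $b$ is odd and $2\,x$ is even). An all-even orbit would therefore satisfy $a_j\sspleq (b-1)/4$ for every $j$, turning the iteration into plain doubling $a_{j+1}\sspeq 2\,a_j$, which grows without bound and contradicts $a_j\sspleq (b-1)/2$. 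Combined with the disjointness of orbits, the greedy procedure then selects exactly one representative per orbit and terminates precisely when all orbits, and hence all of $RRS^*(b)$, are covered; this gives (4). Counting yields $c^*(b)\,P(b)\sspeq |RRS^*(b)|\sspeq \varphi(b)/2$, and comparison with the coach theorem~(\ref{coachth}) and Schick's identity~(\ref{SchickId}) produces $c^*(b)\sspeq c(b)\sspeq B(b)$, proving (5).

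The main obstacle is the parity lemma; the group-theoretic parts are direct translations of coset statements in $G^*_b$, and the identification $P(b)\sspeq k(b)\sspeq pes(b)$ is immediate from the definitions of the $\text{mod}^*$ equivalence and of the quasi-order. The only place where the $\text{mod}^*$ folding could have caused trouble, namely whether odd inputs actually reach every orbit, is resolved by the elementary doubling argument above.
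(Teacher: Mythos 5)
Your proposal is correct, and for most of the proposition it runs parallel to the paper's own proof: recasting the recurrence as multiplication by the class of $2$ in $G^*_b$ is a tidier packaging of the paper's appeal to multiplicativity of $\text{mod}^*$ (which reduces periodicity to $\text{mod}^*(2^{P},\,b)=1$ and hence to $2^{P}\equiv\pm1\pmod b$), and your parts (2), (3) and (5) coincide with the paper's arguments essentially line for line. The genuine divergence is in part (4), where both of you must show that seeding only with odd elements of $RRS^*(b)$ eventually covers the even ones. The paper argues forwards: every even $2k\in RRS^*(b)$ has an odd part $m<2k\le(b-1)/2$, hence $m\in RRS^*odd(b)$ lies in exactly one cycle by construction, and the successive doublings $m,2m,\dots,2k$ all stay below $(b-1)/2$, so $2k$ sits in that same cycle. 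You argue by contradiction via a parity lemma: an orbit avoiding odd elements would have to take the plain-doubling branch forever and grow without bound. Both are sound; the paper's version buys an explicit locator (it tells you \emph{which} cycle an even element lies in, namely that of its odd part, implicitly using that intermediate doublings do not fold), while yours buys robustness — it needs no monotonicity bookkeeping along the chain of doublings and, combined with your observation that orbits are cosets of $\langle\overline{2}^{\,*}\rangle$ and hence equinumerous and disjoint, it makes the counting in (5) immediate. Your framing also makes explicit the bijectivity of $x\mapsto\text{mod}^*(2x,\,b)$, which the paper leaves tacit.
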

\psn
\begin{proof} 
\pn
\begin{enumerate} 
\item The condition for the primitive length of the period $P(b,\,i)$ of $MDSseq(b,\,i)$, if existing, is $\modstaran{a(b,i)\,2^{P(b,\,i)\sspp 1}}{b}\sspeq  \modstaran{a(b,i)\,2^1}{b}$ for the smallest $P(b,\,i) \sspgr 0$. Because $0\sspkl a(b,\,i)\sspleq \floor{\frac{b-1}{2}}$, hence $\modan{a(b,\,i)}{b} \sspeq a(b\,\,i)$, it follows that $\modstaran{a(b,\,i)}{b}\sspeq a(b,\,i)$. Multiplicativity of $\text{mod}^*$ tells that the condition is $\modstaran{2^{P(b,\,i)}}{b}\sspeq 1$, with $1 \sspin RRS^*(b)$, representing the unique identity element of $\text {mod}^*$. Hence a solution exists, because there is always a solution for $\modan{2^{P(b,\,i)}}{b}\sspeq +1$, namely the $order(2,\,b)$. But this $P(b,\,i)$ may not give the primitive length of the period (see the next part). Therefore $MDSseq(b,\,i)$ is certainly periodic for each $i$, independently of $a(b,\,i)$.
\item  $\modstaran{2^{P(b)}}{b}\sspeq 1$ means that $2^{P(b)}\sspequiv \amodn{+1}{b}$ or $2^{P(b)}\sspequiv \amodn{-1}{b}$. For the primitive length of the period the least $P(b)\sspgr 0$ has to be chosen. This is the same congruence problem as the one for $k(b)$ of the coach system $\Sigma(b)$, and also for $pes(b)$ of the $SBB(b)$ system.
\item $MDS(b,\,i)_{P(b)}\sspeq \modstaran{a(b,\,i)\,2^{P(b)}}{b}$. Because $0\sspkl a(b,\,i) \sspleq \frac{b-1}{2}$, and if $\modan{2^{P(b)}}{b}\sspeq +1$ then $\text{mod}^*$ becomes $\text{mod}$, and the result is obviously $a(b,\,i)$. If $\modan{2^{P(b)}}{b}\sspeq -1$ then $\modan{-a(b,\,i)}{b} \sspgeq \frac{b-1}{2}$ and the result is $\modan{+a(i,\,b)}{b}\sspeq a(i,\,b)$\,.
\item By construction of $MDS(b)$ all odd numbers of $RRS^*$ appear once in the union of $MDS(b,\,i)$, for $i\sspeq \range{1}{c^*(b)}$. Each even number $2\,k$ of the complete set $MDS(b)$ has a smaller odd ancestor \seqnum{A000265}$(2\,k)$ (its odd part) in some cycle $MDS(b,\,i)$ with a certain odd initial value. Every even element of $RRS^*$, always $\sspleq \frac{b-1}{2}$, is in one of the $c^*(b)$ cycles of $MDS(b)$ because its odd ancestor, always $\sspkl  \frac{b-1}{2}$, has to be in precisely one cycle $MDS(b,i)$, with a certain odd initial value.
\item A corollary of {\it 1.}, {\it 4.} and {\it 2.} because \dstyle{\#RRS^*(b)\sspeq\sigma(b)\sspeq \frac{\varphi(b)}{2}}.\pn
$\sum_{i=1}^{c^*(b)}\,P(b,\,i) \sspeq c^*(b)\,P(b)\sspeq \#RRS^*(b)$. Then {\it 2.} together with eqs.~\ref{coachth} and ~\ref{SchickId} are used.
\end{enumerate}
\end{proof}
\psn
For the cycle system $MDS(b)$ for $b\sspeq 2\,n\sspp 1$, with $n\sspeq \range{1}{35}$, see {\it Table 2}, and also \seqnum{A334430}.
\pbn
$MDS(b,\,i)_j$, for $j\sspeq \range{1}{P(b)}$, are the elements of the cycle $MDS(b,\,i)$. For simplicity the argument $(b,\,i)$ will by suppressed, and instead of $MDS_j$ just $c_j$ ($c$ for cycle) is used in the following {\it Proposition}.  
\begin{proposition}\label{RecMDS}
\psn
For each cycle $MDS(b,\,i)$ the members $c_j\sspeq c(b,\,i)_j$ satisfy the recurrence
\Beq
c_j \sspeq \Cases2{2\,c_{j-1}}{$\text{if}\  \ c_{j-1}\sspleq \floor{\frac{b-1}{4}}$\,,}{b\sspm 2\,c_{j-1}}{$\text{if} \ \  c_{j-1}\sspgr \floor{\frac{b-1}{4}}$\,,}
\Eeq
for $j\sspgeq 2$, and the input is $c_1\sspeq \modstaran{2\,a(b,\,i)}{b}$.
\end{proposition}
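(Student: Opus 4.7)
By the definition of $MDSseq$ the cycle entries are $c_j \sspeq \modstaran{a\,2^j}{b}$ with $a\sspeq a(b,\,i)\sspin RRS^*odd(b)$. I expect this recurrence to follow directly from the definition, the multiplicativity of $\text{mod}^*$ established in Section 4A, and a short case split on the size of $c_{j-1}$; I do not foresee any real obstacle. The initial value is immediate: substituting $j\sspeq 1$ into the definition yields $c_1 \sspeq \modstaran{2\,a(b,\,i)}{b}$, which is exactly the claimed input.

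For the recurrence itself I would first invoke multiplicativity of $\modstarn$, applicable because $\gcd(2,\,b)\sspeq 1$ (as $b$ is odd) and $\gcd(c_{j-1},\,b)\sspeq 1$ (as $c_{j-1}\sspin RRS^*(b)$), to write
\Beq
c_j \sspeq \modstaran{2\cdot(a\,2^{j-1})}{b} \sspeq \modstaran{2\,c_{j-1}}{b}\,.
\Eeq
Since $c_{j-1}$ is already a representative of its $\text{mod}^*$-class, it satisfies $1\sspleq c_{j-1}\sspleq \frac{b-1}{2}$, so the remaining task is the explicit evaluation of $\modstaran{2\,c_{j-1}}{b}$ from the defining formula in terms of ordinary $\text{mod}$.

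I would then split on whether $c_{j-1}\sspleq \floor{\frac{b-1}{4}}$ or $c_{j-1}\sspgr \floor{\frac{b-1}{4}}$. In the first case, $2\,c_{j-1}\sspleq 2\floor{\frac{b-1}{4}}\sspleq \frac{b-1}{2}\sspkl \frac{b}{2}$, hence $\modan{2\,c_{j-1}}{b}\sspeq 2\,c_{j-1}$ is already in the interval $[1,\,b/2]$, and the first clause of the defining formula for $\text{mod}^*$ gives $\modstaran{2\,c_{j-1}}{b}\sspeq 2\,c_{j-1}$. In the second case, $2\,c_{j-1}\sspgr \frac{b-1}{2}$ while $2\,c_{j-1}\sspleq b\sspm 1\sspkl b$, so $\modan{2\,c_{j-1}}{b}\sspeq 2\,c_{j-1}\sspgr \frac{b}{2}$, and the second clause yields $\modstaran{2\,c_{j-1}}{b}\sspeq \modan{-2\,c_{j-1}}{b}\sspeq b\sspm 2\,c_{j-1}$. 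Combining the two cases produces the piecewise formula in the statement.

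The only bookkeeping point, and the mild care needed in the case split, is to verify the bound $2\floor{\frac{b-1}{4}}\sspleq \frac{b-1}{2}$ for both residue classes of $b$ modulo $4$: writing $b\sspeq 4\,m\sspp 1$ gives $2\,m\sspeq \frac{b-1}{2}$ (equality), and $b\sspeq 4\,m\sspp 3$ gives $2\,m\sspkl \frac{b-1}{2}$, so the boundary value $c_{j-1}\sspeq \floor{\frac{b-1}{4}}$ falls cleanly into the first branch. No further machinery is required.
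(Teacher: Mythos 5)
Your proof is correct and takes essentially the same route as the paper's: the $\text{mod}^*$ definition of $c_j$, multiplicativity to extract the factor $2$, and a case split at $\lfloor (b-1)/4\rfloor$. If anything your version is slightly tidier, since reducing first to $\modstaran{2\,c_{j-1}}{b}$ with $c_{j-1}$ already the canonical representative in $[1,\,(b-1)/2]$ disposes in one stroke of the sub-case in which $\modan{a\,2^{j-1}}{b}$ lies in the upper half-interval, which the paper's written argument leaves to ``a similar manner.''
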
 
\begin{proof}
This uses the $\text{mod}^*$ definition of $c_j$, and multiplicativity of $\text{mod}$ to extract a factor 2. The second line follows from the $c_{j}$ definition in the case when $ (b-1)/2\sspkl \modan{2\,a\,2^{j-1}}{b}\sspleq (b-1)$, \ie\, $ (b-1)/4 \sspkl \modan{a\,2^{j-1}}{b}\sspleq \frac{b-1}{2}$. In this case the  $\text{mod}^*$ definition of $c_{j-1}$ tells that $c_{j-1}\sspeq \modan{a\,2^{j-1}}{b}$. Because in this case $c_j \sspeq \modan{-2\,a\,2^{j-1}}{b}\sspeq -2\,c_{j-1}$, that is $b\sspm 2\,c_{j-1}$ because the numbers $c$ are always from $RRS^*(b)$, hence $\sspleq \frac{b-1}{2}$. The simpler first line is proved in a similar manner.    
\end{proof}
\pn
This can be written as one line formulae.
\begin{corollary} \label{RecMDSCor}
\psn
\Beqarray
c_j &\sspeq& \frac{1}{2} \left( b\sspm \abs{b\sspm 4\,c_{j-1}}\right).\nonumber \\
c_j&\sspeq& \modstaran{2\,c_{j-1}}{b}\,. \nonumber \\
c_j&\sspeq& \modan{(-1)^{p_j}\,2\,c_{j-1}}{b},\  \  \text{where}\ \ p_j\sspeq
\text{parity}(c_j).
\Eeqarray
\end{corollary}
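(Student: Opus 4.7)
The strategy is to show that each of the three one-line formulae is equivalent to the two-case recurrence of Proposition \ref{RecMDS} by a case split on whether $c_{j-1}\sspleq \floor{(b-1)/4}$ or $c_{j-1}\sspgr \floor{(b-1)/4}$. Throughout I use that $b$ is odd and that $c_{j-1}\sspin RRS^*(b)$, so $1\sspleq c_{j-1}\sspleq (b-1)/2$.

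For the first identity $c_j\sspeq \tfrac{1}{2}(b \sspm \abs{b\sspm 4\,c_{j-1}})$, I would show that the two cases correspond exactly to the sign of $b\sspm 4\,c_{j-1}$. In Case 1 ($c_{j-1}\sspleq \floor{(b-1)/4}$) one has $4\,c_{j-1}\sspleq b\sspm 1\sspkl b$, so $b\sspm 4\,c_{j-1}\sspgr 0$ and the formula collapses to $2\,c_{j-1}$. In Case 2 ($c_{j-1}\sspgr \floor{(b-1)/4}$, \ie\ $c_{j-1}\sspgeq \floor{(b-1)/4}\sspp 1$), a quick check for $b\sspeq 4m\sspp 1$ and $b\sspeq 4m\sspp 3$ gives $4\,c_{j-1}\sspgeq b\sspp 1\sspgr b$, whence $\abs{b\sspm 4\,c_{j-1}}\sspeq 4\,c_{j-1}\sspm b$ and the formula collapses to $b\sspm 2\,c_{j-1}$. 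This matches Proposition \ref{RecMDS}.

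For the second identity $c_j\sspeq \modstaran{2\,c_{j-1}}{b}$, I would note that $0\sspkl 2\,c_{j-1}\sspleq b\sspm 1$, so $\modan{2\,c_{j-1}}{b}\sspeq 2\,c_{j-1}$, and then apply the defining case split of $\text{mod}^*$ (Definition \ref{modstar}): $2\,c_{j-1}\sspleq b/2$ iff $c_{j-1}\sspleq \floor{b/4}$, and for odd $b$ one has $\floor{b/4}\sspeq \floor{(b-1)/4}$ (verified separately for $b\sspequiv 1$ and $b\sspequiv 3\pmod 4$). Hence the two branches of $\modstaran{\,\cdot\,}{b}$ produce $2\,c_{j-1}$ and $b\sspm 2\,c_{j-1}$ in precisely the same cases as the recurrence.

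For the third identity $c_j\sspeq \modan{(-1)^{p_j}\,2\,c_{j-1}}{b}$ with $p_j\sspeq \text{parity}(c_j)$, the seeming circularity is resolved by observing that Proposition \ref{RecMDS} already determines the parity: in Case 1 the value $c_j\sspeq 2\,c_{j-1}$ is even, giving $p_j\sspeq 0$ and $\modan{2\,c_{j-1}}{b}\sspeq 2\,c_{j-1}$; in Case 2 the value $c_j\sspeq b\sspm 2\,c_{j-1}$ is odd (because $b$ is odd), giving $p_j\sspeq 1$ and $\modan{-2\,c_{j-1}}{b}\sspeq b\sspm 2\,c_{j-1}$. Thus the parity of $c_j$ faithfully encodes which branch was taken, and the formula agrees with the recurrence in both cases. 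The only mildly subtle point here—and the part I would treat most carefully—is making explicit that $p_j$ is well defined by the recurrence itself, so the identity is a genuine formula and not a tautology.
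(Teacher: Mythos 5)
Your proposal is correct and follows essentially the same route as the paper, which justifies all three one-line formulae by the same case split on $c_{j-1}\sspleq \floor{(b-1)/4}$ versus $c_{j-1}\sspgr \floor{(b-1)/4}$ against the two-branch recurrence of Proposition~\ref{RecMDS} (the paper only sketches this in the remarks following the corollary, noting $\modan{2\,c_{j-1}}{b}\sspeq 2\,c_{j-1}$ via $c_{j-1}\sspin RRS^*(b)$ and that the upper alternative gives even $c_j$, the lower odd). Your explicit verification that $\floor{b/4}\sspeq\floor{(b-1)/4}$ for odd $b$ and your remark that the parity $p_j$ is determined by the recurrence rather than circularly are welcome elaborations of details the paper leaves implicit.
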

\psn
for $j\sspeq \range{2}{P(b)}$, and the initial value is $c_1 \sspeq c(b,\,i)_1$, that is $2\,a(b,\, i)$ if $a(b,\, i)\sspleq \floor{\frac{b-1}{4}}$, and $b\sspm 2\,a(b,\,i)$ if $a(b,\,i)\sspgr \floor{\frac{b-1}{4}}$. In the second equation $c(b,\,i)_{j-1}\sspin RRS^*(b,\,i)$ from {\it Proposition}~\ref{MDSth} {\it 4.} has been used, hence 
$\text {mod}(2\,c(b,i)_{j-1},\,b)\sspeq  2\,c(b,i)_{j-1}$. In the last equation the recurrence is considered $\text{mod}\, b$. In the upper alternative $c_j$ is even, and the lower one odd.   
\begin{example} \label{Ex7}
$b\sspeq 63,\, c^*(63)\sspeq 3,\, P(63)\sspeq 6$, $\floor{\frac{63-1}{4}}\sspeq 15$.
\pn $MDS(63,\,3)\sspeq [22,\,19,\,25,\,13,\,26,\,11]$. $c_2(63,\,3)\sspeq 19\sspgr 15$, $c_3(63,\,3)\sspeq 63$ $\sspm 2\cdot 19\sspeq \frac{1}{2}\,(63\sspm (4\cdot 19\sspm 63)) \sspeq 25$.
\end{example}
\pbn
Like in the complete system $SBB(b)$ the elements (numbers) of the cycles of $MDS(b)$ are interpreted as length ratios diagonal/radius in regular $(2\,b)$-gons as follows. A number $k$ (even or odd) from a cycle $MDS(b,i)$, hence $k\sspin [1,\,(b-1)/2]$, for $i\sspin \{\range{1}{c^*(b)}\}$, is mapped to \dstyle{2\,\cos\left(\frac{\pi\,k}{b}\right)\sspeq R(k,\,\rho(b))\sspgr 0}, where $R$ are monic {\sl Chebyshev} polynomials shown in \seqnum{A127672}, and \dstyle{\rho(b)\sspeq 2\,cos\left(\frac{\pi}{b}\right)}, the length ratio diagonal/side of the diagonal $d_2^{(b)}\sspeq \overline{V_0^{(b)}\,V_2^{(b)}}$ in the regular $b$-gon. [The $R$ polynomials are in \cite{ASt}, Table 22.7, p. 797,  called $C$. In \cite{KA}, Table 2, p. 72, they are called {\sl Lucas} polynomials.] \psn
The connection to length ratios diagonal/radius in the regular $(2\,b)$-gon comes from the trigonometric identity
\Beq
2\,\cos\left(\frac{\pi\,k}{b}\right) \sspeq 2\,\sin\left(\frac{\pi\,(b\ssppm 2\,k)}{2\,b}\right)\,.
\Eeq
For the positive sign see also the table and a comment in \seqnum{A082375}. But here the negative sign will be chosen. As mentioned above all values are positive. This is an advantage over considering cosine arguments $a(b,\,i)\,2^k$ like in the later discussed complete iteration system $IcoS(b)$.
\psn
\Eg\, $b\sspeq 5$, $MDS(5,\,1)\sspeq (2,\,1)$: $k\sspeq 1$ corresponds to the two diagonal/radius ratios $d_7^{(10)}$ and $d_3^{(10)}$ both of length  \dstyle{\varphi}, with the golden ratio $\varphi\sspeq 1.61803398...\,\sspeq$\seqnum{A001622} (in the lower and upper half-plane, respectively ). For $k\sspeq 2 $ these are $d_9^{(10)}$ and $d_1^{(10)}$, the side/radius ratios of length $\varphi\sspm 1$.
\psn
The numbers \dstyle{2\,\cos\left(\frac{\pi\,k}{b}\right)\sspeq R(k,\,\rho(b))}, with $k\sspin RRS^*(b)$, can be taken as zeros of a monic polynomial $P^*(b,x)$ of degree $\sigma(b)\sspeq$\seqnum{A023022}$(b)\sspeq$\seqnum{A055034}$(b)\sspeq \delta(b)$. It follows that this is a polynomial with coefficients from the ring of integers of the simple algebraic field extension $\Q(\rho(b))$, denoted by \dstyle{{\cal O}_{\Q(\rho(b))}\sspfed \Z[\rho(b)]}. The following proposition gives the formula. See {\it Table 1} for $b \sspeq 2\,n+1$. for $n\sspeq\range{1}{10}$.  
\psn
\begin{proposition} 
{\bf $\bf P^*(b,\,x)$ polynomials}
\pn
The monic polynomials 
\Beq
P^*(b, x)\sspdef \prod_{j=1}^{\sigma(b)} \left(x\sspm 2\,\cos\left(\frac{\pi\,RRS^*(b)_j }{b}\right)\right)  \sspeq \prod_{j=1}^{\sigma(b)} (x\sspm R(RRS^*(b)_j,\,\rho(b))\Big |_{C(b,\,\rho(b))\sspeq 0}\ .
\Eeq
with $b\sspeq 2\,n\sspp 1$, $n\sspin \N$, have coefficients which are integers in the simple field extension $\Q(\rho(b))$, where \dstyle{\rho(b)\sspeq 2\,\cos\left(\frac{\pi}{b}\right)},
\end{proposition}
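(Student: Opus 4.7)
The plan is to show that when the product defining $P^*(b,x)$ is expanded, each coefficient is already a polynomial in $\rho(b)$ with rational integer coefficients, and then to invoke the standard fact that $\mathbb{Z}[\rho(b)]$ is exactly the ring of integers of the totally real cyclotomic field $\mathbb{Q}(\rho(b)) = \mathbb{Q}(2\cos(\pi/b))$.

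First I would observe that each zero is of the form $R(k,\rho(b))$ with $k \in RRS^*(b)$, and that the monic Chebyshev polynomials $R(k,x)$ listed in \seqnum{A127672} have only rational integer coefficients. Hence every zero lies in $\mathbb{Z}[\rho(b)]$. Since $\mathbb{Z}[\rho(b)]$ is a commutative ring, it is closed under addition and multiplication, and therefore every elementary symmetric polynomial $e_m$ in the $\sigma(b)$ zeros belongs to $\mathbb{Z}[\rho(b)]$. Because the coefficients of the monic polynomial $P^*(b,x)$ are (up to sign) exactly these $e_m$, all coefficients lie in $\mathbb{Z}[\rho(b)]$.

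Next I would make precise the role of the reduction modulo $C(b,\rho(b))=0$ indicated by the vertical bar in the proposition. Each coefficient, after expansion, is an element of $\mathbb{Z}[t]$ evaluated at $t=\rho(b)$; by the division algorithm with the monic integer-coefficient polynomial $C(b,t)$ one obtains a unique representative of degree less than $\delta(b) = \deg C(b,t)$, and monicity of $C$ preserves integrality of coefficients. This gives the canonical representation of each coefficient as an integer linear combination of $1,\rho(b),\ldots,\rho(b)^{\delta(b)-1}$, exhibiting membership in $\mathbb{Z}[\rho(b)]$ explicitly.

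Finally, to identify "integers in $\mathbb{Q}(\rho(b))$" with elements of $\mathbb{Z}[\rho(b)]$, I would cite the standard result that for $\rho(b) = 2\cos(\pi/b) = \zeta_{2b} + \zeta_{2b}^{-1}$ the ring of algebraic integers of the maximal real subfield $\mathbb{Q}(\rho(b))$ of the cyclotomic field $\mathbb{Q}(\zeta_{2b})$ is generated by $\rho(b)$ over $\mathbb{Z}$, so $\mathcal{O}_{\mathbb{Q}(\rho(b))} = \mathbb{Z}[\rho(b)]$; this is consistent with the notation fixed earlier in the section. I do not foresee a real obstacle: the content of the proposition is essentially bookkeeping, and the only nontrivial input is the well-known structure of the real cyclotomic ring of integers. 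The possible subtlety would arise if one wanted to show that the $P^*(b,x)$ are in fact minimal, but the statement only asserts integrality of the coefficients.
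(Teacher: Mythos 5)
Your proposal is correct and follows essentially the same route as the paper: expand the product, note that the zeros $R(k,\rho(b))$ lie in $\Z[\rho(b)]$ because $R\in\Z[x]$, and reduce each coefficient modulo the monic $C(b,x)$ to express it in the power basis $1,\rho(b),\dots,\rho(b)^{\delta(b)-1}$ with rational integer coefficients. Your closing appeal to the theorem that $\Z[\rho(b)]$ is the full ring of integers of the maximal real subfield is a nice extra justification, though the paper sidesteps it by simply denoting $\Z[\rho(b)]$ as ${\cal O}_{\Q(\rho(b))}$.
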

\pn
In the evaluation the minimal polynomial $C(b,\, x)$ of $\rho(b)$, given in \seqnum{A187360}, is used to eliminate powers of $\rho(b)$ with exponents larger than $\delta(b) - 1\sspeq \sigma(b) - 1$.  
\psn
\begin{proof}
The first equation gives the definition. The last equation follows from the definition of the $R$ polynomials in terms of {\sl Chebyshev} $T$ polynomials, and their trigonometric definition. Then the coefficients will be expressed in the power basis of $\Q(\rho(b))$ of degree $[\Q(\rho(b)):\Q] \sspeq \delta(b)$ with rational integer coefficients, because $R\sspin \Z[x]$.
\end{proof}
\begin{example} \label{Ex8}
\Beqarray
RRS^*(9)&\sspeq&[1,\,2,\,4], \ \sigma(9)\sspeq 3, \nonumber\\
P^*(9,\,x) &\sspeq& (x\sspm 2\,\cos(\pi/9))\,(x\sspm 2\,\cos(\pi\,2/9))\,(x\sspm 2\,\,\cos(\pi\,4/9)), \nonumber \\
&\sspeq& x^3 \sspm 2\,\rho(9)\,x^2 \sspp (-3 \sspp 2\,\rho(9)^2)\,x \sspm 1, \nonumber \\
&& \text{with}\ \rho(9)\sspeq 2\,\cos(\pi/9)\sspeq\seqnum{A332437}\sspeq 1.87938524...\,.
\Eeqarray
\end{example}
\psn
{\bf C)}\ \ {\bf Complete $\bf IcoS$-system}
\psn
An alternative approach to the modified modular doubling sequence has been considered by {\sl Gary W. Adamson} and {J. Kappraff} \cite{KA} by studying iteration of the quadratic polynomial $R(2,\,x)\sspeq x^2 \sspm 2$  with certain seeds (see above for {\sl Chebyshev} $R$ polynomials, and the name {\sl Lucas} polynomial in \cite{KA}). It has also been used as {\it r-t} table ({\it r-t} for root trajectory) by {\sl Gary W. Adamson} in his Aug 25 2019 comment in \seqnum{A065941}. As will be shown this is very similar to the $MDS$-system. The iteration of  $x^2 \sspm 2$ is proposed in a procedure 1. by {\sl Devaney} \cite{Devaney}, on p. 25, for investigation, and in chapter 7.1, pp. 69 - 71, this function is called $Q_{-2}(x)$. It is shown in chapter 10.2, pp. 121 - 126 to be semiconjugate to the function $V(x)\sspeq 2\abs{x} \sspm 2$, and also to the angle doubling function $D(\theta)\sspeq 2\,\theta $ on the unit circle.
\psn 
If one starts the iteration of $R(2,\,x)$ with a seed $2\,\cos(\pi\,1/b)$, for $b\sspeq 2\,n\sspp 1$, with $n\sspin \N$, one does not obtain a purely periodic sequence if signs are respected. In order to obtain purely periodic sequences one starts the iteration with $R^{[1]}(2,\,x)$, not with the identity map $R^{[0]}(2,\,x)\sspeq x$. The later given theorem shows periodicity with primitive period length $P(b)$ like for $MDS(b)$, and also the number of seeds $c^*(b)$ necessary to obtain finally all $2\,\cos(\pi\,j/b)$, values with $j\sspin RRS^*(b)$.
Note that not all iterations will be positive. Hence for the interpretation as length ratios diagonal/radius in $(2\,b)$-gons the signs will have to be ignored. \psn
This complete system of periodic signed cosine sequences obtained by iteration will be called  $IcoS(b)$, and its elements are denoted by sets $IcoS(b,\,i)$, for $i\sspeq \range{1}{c^*(b)}$.
\psn
\begin{lemma} \label{RpolPeriodicity}
The iteration with the {\sl Chebyshev} polynomial $R(2,\, x)\sspeq x^2\sspm 2$ is periodic  for any seed $x\sspeq 2\,\cos(\pi\,j/b)\sspeq R(j,\,\rho(b))$, where $\rho(b)\sspeq 2\,\cos(\pi/b)$, and odd $j\sspin RRS^*(b)$, and the primitive length of the period is $P(b)\sspeq$\seqnum{A003558}$((b-1)/2)$. 
\end{lemma}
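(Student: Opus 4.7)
The plan rests on the classical doubling identity for $R(2,x)=x^2-2$ and the characterization of $P(b)$ already given in Proposition~\ref{MDSth}. The first step is to observe that for any real $\theta$,
\Beq
R(2,\,2\cos\theta) \sspeq 4\cos^2\theta \sspm 2 \sspeq 2\cos(2\theta),
\Eeq
so iterating $R(2,\cdot)$ on a cosine seed doubles the angle. By induction, the $m$-th iterate starting from $x\sspeq 2\cos(\pi j/b)\sspeq R(j,\rho(b))$ is
\Beq
R^{[m]}(2,\,2\cos(\pi\,j/b)) \sspeq 2\cos(\pi\,j\,2^m/b),\ \text{for}\ m\sspgeq 0.
\Eeq

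The next step is to translate periodicity of the iterate sequence into an elementary congruence. Since the iteration is started at $R^{[1]}(2,x)$, the primitive period length $P$ is the least positive integer for which $R^{[P+1]}(2,x)\sspeq R^{[1]}(2,x)$, \ie\,
\Beq
2\cos(\pi\,j\,2^{P+1}/b) \sspeq 2\cos(\pi\,j\,2/b).
\Eeq
Using that $2\cos\alpha \sspeq 2\cos\beta$ holds if and only if $\alpha \sspequiv \pm\beta \pmod{2\pi}$, this becomes $j\,2^{P+1} \sspequiv \pm j\,2 \pmod{2\,b}$, equivalently $j\,2^{P} \sspequiv \pm j \pmod{b}$. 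Since $\gcd(j,\,b)\sspeq 1$ for $j\sspin RRS^*(b)$, cancelling $j$ yields the clean congruence
\Beq
2^{P} \sspequiv \amodn{\pm 1}{b},
\Eeq
and the least positive $P$ satisfying this is precisely $k(b)\sspeq pes(b)\sspeq$\seqnum{A003558}$((b-1)/2)$, by the defining property of the quasi-order recorded in \Eq{\ref{quoth}} and used in Proposition~\ref{MDSth}, part {\it 2}.

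To finish, one must check that this $P$ is the primitive period and not a proper multiple of a smaller one. This is immediate: any smaller positive period $P'$ would, by the same equivalence, give $2^{P'}\sspequiv \pm 1 \pmod{b}$, contradicting minimality of $P(b)$. Independence of the seed $j$ is also immediate since the congruence $2^{P}\sspequiv \pm 1 \pmod{b}$ does not involve $j$. The only mild subtlety, and the one place to be careful, is handling the two signs in $\pm 1$ uniformly: the $+$ case corresponds to genuine $2\pi$-periodicity of the cosine, the $-$ case to the cosine's even symmetry $\cos(\alpha)\sspeq \cos(-\alpha)$; both are absorbed by the definition of $k(b)$, so no further case analysis is needed.
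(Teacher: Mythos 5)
Your proof is correct and takes essentially the same route as the paper: both reduce the periodicity condition $R^{[P+1]}(2,x)\sspeq R^{[1]}(2,x)$, via the $2\pi$-periodicity and evenness of cosine (equivalently, the index symmetries of the $R$-polynomials), to the congruence $2^{P}\sspequiv \amodn{\pm 1}{b}$ after cancelling $j$ using $\gcd(j,\,b)\sspeq 1$. Your direct cancellation of $j$ in the congruence is a slightly cleaner phrasing than the paper's divisibility argument ($q\sspeq q'\,j$), but the underlying idea is identical.
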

\begin{proof}
The standard formula for the {\sl Chebyshev} $T-$polynomials (see \seqnum{A053120}, and \cite{Rivlin}, p. 5, Exercise 1.1.6) $T(n,\, T(m,\,x)) \sspeq T(n\,m,\, x)$ translates for $R(n, x)\sspeq 2\,T(n,\,x/2)$ to $R(n,\,R(m,\,x)) \sspeq R(n\,m,\, x)$. The $T$ and $R$ polynomials with negative index are identical with the ones for positive index: $R(-n,\,x)\sspeq R(n,\,x)$, for $n\sspin \N_0$.\pn
From the periodicity of the cosine function, and $ R(j,\,\rho(n)) \sspeq 2\,\cos(\pi\,j/n)$, follows that $R(k,\, R(j,\, \rho(n))\sspeq R(k\,j,\, \rho(n)) \sspeq  R(k\,j\,\sspm q\,2\,n,\, \rho(n))$, for $n\sspin \N$ and $q\sspin \Z$. Together with the symmetry under sign flip of the index, periodicity requires that
\Beq
R(k\,j,\,\rho(n)) \sspeq R(\pm(k\,j \sspm q\,2\,n),\, \rho(n)), \ \text{for}\ \ n\sspin \N, \ \text{and}\  q\sspin \Z.
\Eeq
For the iteration of $R^{[k]}(2,\,x)\sspeq R(2^k,\,x)$, for $k\sspin \N$, periodicity therefore means in the present context, that, with any seed $x\sspeq 2\,\cos(\pi\,j/b)$, and positive odd $j$ with $\gcd(j,\,b)\sspeq 1$ and $j\sspleq \frac{b-1}{2}$, \ie $j\sspin RRS^*odd(b)$, 
\Beq
R(2^k\,j,\,\rho(b)) \sspeq R(\pm(2^k\,j \sspm q\,2\,b),\,\rho(b)) \shouldbe R(j\,2^1,\,\rho(b)),
\Eeq
For the smallest $k\sspgr 1$, and then the primitive length of the period is $P(b) \sspeq k-1$. 
This leads to $q\,b\sspeq j\,(2^{k-1}\mp 1)\sspgr 0$, \ie \ $q\sspeq q^{\prime}\,j$, with positive integer $q^{\prime}$ because $j \notdiv b$. Therefore, $j$ drops out and $2^{k-1} \sspequiv \amodn{\pm 1}{b}$. Thus, $P(b)\sspeq pes(b)\sspeq$\seqnum{A003558}$((b-1)/2)$.
\end{proof}
\psn
In order to see the close relationship to the $MDS$-system the following {\it lemma} will establish the connection to the $\text{mod}^{*}$ congruence.\psn
\begin{lemma}
\Beq
R(a(b,\,i)\,2^k,\,\rho(b))\sspeq 2\,\cos\left(\frac{\pi\,a(b,\,i)\,2^k}{b}\right)\sspeq \pm\,2\,\cos\left(\frac{\pi\,\widehat j}{b} \right)\, \ \  \text {with} \ \ \widehat j\sspeq \modstaran{a(b,\,i)\,2^k} {b}\,,  
\Eeq
and the sign is $+$ for $x\sspdef \modan{a(b,\,i)\,2^k}{2\,b}\sspin I\sspdef (0,\,\frac{b}{2})$ or $x\sspin IV\sspdef [\frac{b}{2},\,b)$, and the sign is $-$ if $x\sspin II\sspdef [b,\,\frac{3\,b}{2})$ or $x\sspin III\sspdef [\frac{3\,b}{2},\,2b-1)$. 
\end{lemma}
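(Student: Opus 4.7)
The plan is to reduce everything to $2\pi$-periodicity of cosine followed by a four-case analysis based on which quadrant of the unit circle the angle $\pi x/b$ occupies.

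First, I would invoke the trigonometric definition of the Chebyshev $R$-polynomial, namely $R(m,\rho(b)) = 2\cos(m\pi/b)$ (already used in the proof of Lemma~\ref{RpolPeriodicity}), to rewrite the left-hand side. Writing $a(b,i)\,2^k = 2bq + x$ with $x := \mathrm{mod}(a(b,i)\,2^k,\,2b) \in \{0,1,\ldots,2b-1\}$, the $2\pi$-periodicity of cosine gives
\[
2\cos\!\left(\frac{\pi\,a(b,i)\,2^k}{b}\right) \;=\; 2\cos\!\left(\frac{\pi x}{b}\right),
\]
so only the value $x$ matters for the left-hand side. Moreover $\hat\jmath := \mathrm{mod}^*(a(b,i)\,2^k,\,b)$ depends only on $a(b,i)\,2^k$ modulo $b$, hence only on $x \bmod b$, so the case split along the four quadrant-intervals of $[0,2b)$ is the natural one.

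Second, I would compute $\hat\jmath$ explicitly on each quadrant. For $x \in [0,b)$ one has $\mathrm{mod}(x,b) = x$, so $\hat\jmath$ equals $x$ when $x < b/2$ and $b-x$ when $x > b/2$; for $x \in [b,2b)$ one has $\mathrm{mod}(x,b) = x-b$, so $\hat\jmath$ equals $x-b$ when $x < 3b/2$ and $2b-x$ when $x > 3b/2$. The elementary cosine identities $\cos(\pi-\theta) = -\cos\theta$, $\cos(\pi+\theta) = -\cos\theta$, and $\cos(2\pi-\theta) = \cos\theta$ then translate each case into the asserted identity $2\cos(\pi x/b) = \pm\,2\cos(\pi\hat\jmath/b)$ with the indicated sign.

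Finally, because $b$ is odd and $\hat\jmath \in RRS^*(b)$, the inequality $\hat\jmath \leq (b-1)/2 < b/2$ gives $\cos(\pi\hat\jmath/b) > 0$, so the $\pm$ genuinely records the sign of the left-hand side rather than being absorbed by a sign on the right. The only real obstacle is careful endpoint bookkeeping: one must observe that $x = b/2$ and $x = 3b/2$ cannot occur, since $x$ is an integer and $b$ is odd, so the four half-open intervals partition $[0,2b)$ without overlap or gap and the case analysis is exhaustive.
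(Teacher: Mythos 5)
Your proof is correct and follows essentially the same route as the paper's: reduce the argument modulo $2b$ by the periodicity of the cosine, split $[0,2b)$ into the four quarter-intervals, compute $\mathrm{mod}^*$ explicitly in each case, and apply the reflection identities of the cosine; your remark that odd $b$ excludes the endpoints $x=\frac{b}{2}$ and $x=\frac{3b}{2}$, and that $\widehat{\jmath}\leq\frac{b-1}{2}$ forces $\cos(\pi\widehat{\jmath}/b)>0$ so that $\pm$ genuinely records the sign of the left-hand side, is exactly the bookkeeping the paper relies on. Note, however, that the sign pattern you correctly derive --- $+$ on $(0,\frac{b}{2})$ and $(\frac{3b}{2},2b)$, $-$ on $(\frac{b}{2},b)$ and $(b,\frac{3b}{2})$ --- does not agree with the rule as literally printed in the lemma, which attaches the label $IV$ to $[\frac{b}{2},b)$ and assigns it the sign $+$; this is a typo in the lemma's interval definitions (the paper's own proof uses the four quarters in their natural order with signs $+,-,-,+$, and the example $b=17$, $x=16\in(\frac{b}{2},b)$ yielding $-2\cos(\pi/17)$ confirms your version), so your argument establishes the intended, corrected statement.
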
 
\pn
The identification of the unsigned $2\,\cos(\frac{\pi\,\widehat j}{b})$ values with length ratios diagonal/radius in a regular $(2\,b)$-gon are then obtained with the help of \Eq{15}.
\psn
\begin{proof}
\pn
First note that the first term ($k\sspeq 1 $) is only positive if $0\sspkl a(b,\,i) \sspleq \frac{b-1}{4}$. The only negative case for $a(b,\,1)\sspeq 1$ is $IcoS(3,1)\sspeq \{-1\}$. Larger seeds are only needed for $b\sspeq 17,\, 31,\,33,\,... $ (\seqnum{A333855}), and seem to lead always to positive values of the first term. \Eg\, even for $b\sspeq 127$ with $P(127)\sspeq 7$ and $c^*(127)\sspeq 9$ the largest seed is $21\sspkl 31\sspeq \floor{\frac{b-1}{4}}$. 
\psn
It is always possible to simplify to $\pm\,2\,\cos(\frac{\pi\,\widehat j}{b})$ based on the periodicity of cosine, which implies that only $\modan{a(b,\,i)\,2^k}{2\,b}$ enters. Consider the four intervals $I$ to $IV$, given in the {\it lemma}, with signs 
$+,\,-,\,-,\,+$\, of $2\,\cos(\pi\,x/b)$, respectively. Here $x \sspeq \modan{a(b,i)\,2^k}{2\,b}$ is a positive integer $\sspin [1,\,2\,(b-1)]$. If $x_1\sspin I$ then $x_1\sspeq \modan{x_1}{b} \sspeq \modstaran{x_1}{b}$ because $x_1\sspleq \floor{\frac{b}{2}} \sspkl \frac{b}{2}$, and its cosine value is positive. If $x_4\sspin IV$ one has $\modan{x_4}{b}\sspeq x_4\sspm b \sspin II$. Hence $\modstaran{x_4}{b}\sspeq \modan{-x_4}{b}\sspeq -x_4\sspp 2\,b\sspin I$. Similarly for $x_2\sspin II$ and for $x_3\sspin III$, both with negative cosine values, leading to $\modstaran{x_2}{b}\sspeq -x_2\sspp b\sspin I$, and $\modstaran{x_3}{b}\sspeq -x_3\sspp b\sspin I$, respectively.\pn 
\end{proof}
\psn
\begin{definition} \label{IcoS}
{\bf Complete system $\bf IcoS$}\pn
The iteration procedure $\{R^{[k]}(2,\,x)\}_{k\sspgeq 1}$ uses first the seed $x\sspeq 2\,\cos(\frac{\pi}{b})\sspeq \rho(b)$. If in the period $ IcoS(b,\,1)\sspeq \{R(1\cdot 2^k,\,\rho(b))\}_{k\sspgeq 1}^{P(b)}$, consisting after trigonometric simplifications of terms $\pm\,2\,\cos\left(\frac{\pi\,\widehat j}{b}\right)$ (shown in the preceding {\it lemma}), all odd and even numbers $\widehat j\sspin RRS^*(b)$ appear then only this seed $a(b,\,1)$ is needed, and $IcoS(b)\sspeq IcoS(b,\,1)$ is complete. Otherwise another iteration is started using a seed $2\,\cos(\pi\,a(b,\,2)/b)$ with the smallest odd member $a(b, 2)\sspin RRS^*(b)$ that did not appear in $IcoS(b,\,1)$. This gives the period $IcoS(b,\,2)\sspeq \{R(a(b,\,2)\cdot 2^k,\,\rho(b))\}_{k\sspgeq 1}^{P(b)}$, etc., and the $IcoS(b)$ system is complete after all numbers of  $RRS^*(b)$ have been reached.
\end{definition}
\psn
\begin{example} \label{Ex9}
\Beqarray
b\sspeq 17,\,P(b)&\sspeq& 4,\  c^*(b)\sspeq 2: \nonumber\\
IcoS(17,\,1)&\sspeq& \{2\,\cos(\pi\,(2/17)),\, 2\,\cos(\pi\,(4/17)),\, 2\,\cos(\pi\,(8/17)), -2\,\cos(\pi\,(1/17)) \} \nonumber \\
&\sspeq&\{ 1.8649...,\, 1.4780...,\, .1845...,\, -1.9659...\}.\nonumber \\
IcoS(17,\,2)&\sspeq& \{2\,\cos(\pi\,(6/17)),\, -2\,\cos(\pi\,(5/17)),\, -2\,\cos(\pi\,(7/17)),\, -2\,\cos(\pi\,(3/17))\} \nonumber\\
&\sspeq& \{.8914...,\, -1.2052...,\, -.5473...\, -1.7004...\}.
\Eeqarray
\end{example} 
\begin{theorem}
The complete system $IcoS(b)$, with $b\sspeq 2\,n\sspp 1$, consist of \dstyle{c(b)\sspeq \frac{\varphi(b)}{2\,P(b)}\sspeq}\seqnum{A135303}$(n)$ disjoint periods $IcoS(b,\,i)$ of length $P(b)\sspeq $\seqnum{A003558}$(n)$, and each number $\widehat j\sspin RRS^*(b)$ appears once in the elements $\pm\, 2\,\cos\left(\frac{\pi\,\widehat j}{b}\right)$ of this set of complete periods.
\end{theorem}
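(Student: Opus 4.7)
The plan is to reduce the theorem to the already-proved \textit{Proposition}~\ref{MDSth} on the $MDS(b)$ system by transporting its conclusions along the bijection furnished by the \textit{lemma} immediately preceding \textit{Definition}~\ref{IcoS}. That \textit{lemma} asserts that the $k$-th iterate started from the seed $2\cos(\pi\,a(b,i)/b)$ equals $\pm\,2\cos(\pi\widehat j/b)$ where $\widehat j = \modstaran{a(b,i)\,2^k}{b} \in RRS^*(b)$. Since $\widehat j \in [1,(b-1)/2]$ makes the map $\widehat j \mapsto \cos(\pi\widehat j/b)$ injective, the signed iterate uniquely determines $\widehat j$, so the $k$-th entry of the period $IcoS(b,i)$ corresponds one-to-one to the $k$-th entry of $MDS(b,i)$. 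Period length is then immediate from \textit{Lemma}~\ref{RpolPeriodicity}, which has already established that every such iteration has primitive period $P(b) = \seqnum{A003558}((b-1)/2)$, the length claimed in the theorem.

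The second step is to check that the inductive construction of $IcoS(b)$ in \textit{Definition}~\ref{IcoS} runs in lockstep with that of $MDS(b)$ in \textit{Definition}~\ref{MDS}: both start with $a(b,1) = 1$ and, after $i-1$ periods have been produced, both select as $a(b,i)$ the smallest odd element of $RRS^*(b)$ whose corresponding $\widehat j$ has not yet appeared. Under the bijection above, ``missing from previous $IcoS$-periods'' is equivalent to ``missing from previous $MDS$-periods'', so the two constructions terminate after the same number $c^*(b)$ of seeds. Transporting \textit{Proposition}~\ref{MDSth} parts~\textit{4} and~\textit{5} across the bijection then yields the pairwise disjointness of the $IcoS(b,i)$, the identity $c^*(b)\,P(b) = \varphi(b)/2$, and the conclusion $c^*(b) = \varphi(b)/(2\,P(b)) = \seqnum{A135303}(n)$, together with the statement that each $\widehat j \in RRS^*(b)$ appears exactly once among the $\pm\,2\cos(\pi\widehat j/b)$.

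The main delicacy I anticipate is confirming that no $\widehat j \in RRS^*(b)$ is accidentally produced \emph{both} with a $+$ and with a $-$ sign somewhere in the system, which would spoil the ``appears once'' clause. This is handled by the explicit four-interval sign rule in the preceding \textit{lemma}: the sign at step $k$ is a deterministic function of which of the intervals $I,II,III,IV$ contains $\modan{a(b,i)\,2^k}{2b}$, and that residue class is uniquely determined by the iterate itself. Combined with the uniqueness of $\widehat j$ in each $MDS(b,i)$ and the disjointness of the cycles, this guarantees that the pair (sign,\,$\widehat j$) occurs at most once, so each $\widehat j$ contributes exactly one signed cosine to the complete system. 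Once this point is clarified, the rest is a formal transfer of \textit{Proposition}~\ref{MDSth} along the bijection.
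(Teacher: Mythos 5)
Your proposal is correct and takes essentially the same route as the paper: the paper's own (much terser) proof also obtains the period length from Lemma~\ref{RpolPeriodicity}, notes that the odd $\widehat j$ are covered by the construction in Definition~\ref{IcoS}, and transfers the disjointness and counting statements \emph{mutatis mutandis} from Proposition~\ref{MDSth}, parts \textit{4} and \textit{5}. Your explicit treatment of the sign ambiguity via the four-interval lemma is a useful extra detail that the paper leaves implicit.
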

\begin{proof}
By construction of $IcoS(b)$ (definition ~\ref{IcoS}) every odd number $\widehat j$ of $RRS^*(b)$ is in one of the periods $IcoS(B,\,i)$.
The proof for the even numbers $\widehat j$ can be taken over, {\it mutatis mutandis}, from {\it Theorem} ~\ref{MDSth}, {\it 4} and {\it 5}.
\end{proof}
\pn
This shows that $IcoS(b)$ is very similar to the $MDS(b)$ (see the recurrence ~\ref{RecMDSCor}), but works with signed cosine values. The signs have to be ignored for the interpretation as diagonal/radius lengths ratios in the regular $(2\,b)$-gon. Therefore, the $MDS(b)$ system is preferred because it works with positive integers and maps to positive cosine and sine values.
\section{Equivalence of the three complete systems}
\psn
{\bf A1) $\bf MDS(b) \Longrightarrow \Sigma(b)$}
\psn
The proof will be given element-wise:  $MDS(b,\,i) \Longrightarrow \Sigma(b,\,i)$, for odd $b\sspgeq 3$ and $i \sspeq \range{1}{c^*(b)}$. The elements of $MDS(b,\,i)$ are $c(b,\,i)_j$, abbreviated as $c_j$, for $j\sspeq \range{1}{P(b)}$. It has been shown in {\it Theorem} ~\ref{MDSth} {\it 5.} that $c^*(b)\sspeq c(b)$, and in ~\ref{MDSth} {\it 2.} that $P(b)\sspeq k(b)$. 
\psn
For the proof the recurrence relation of the odd members of $MDS(b,\,i)$ will be needed. For this one considers the cycle  $MDS^{\prime}(b,\,i)$ with the input $a(b,\,i)$ as first element $c^{\prime}_0$, and $c^{\prime}_j\sspeq c_j$, for $j\sspeq \range{1}{P(b)-1}$. This $MDS^{\prime}$ appears (in a different notation) in \cite{HP}, at the top of p. 103, in the proof of the quasi-order theorem.  
\psn
\begin{lemma} {\bf Recurrence for odd elements of $\bf MDS^{\prime}(b,\,i)$}
\psn
The odd elements of  $MDS^{\prime}(b,\,i)$, collected in the list $Co^{\prime}(b,\,i)$ with elements ${co^{\prime}}(b,\,i)_j\sspeq co^{\prime}_j$, for $j \sspeq \range{0}{r^*(b,\,i)-1}$, satisfy the recurrence relation (with $l^{\prime}_j\sspeq l^{\prime}(b,\,i)_j$)
\Beq
co^{\prime}_j\sspeq b\sspm 2^{{l^{\prime}_j}}\,co^{\prime}_{j-1}, \ \ \text{for} \  \ j\sspeq \range{1}{r^*(b,\,i)-1},
\Eeq
with input $co^{\prime}_0\sspeq a(b,\,i)$, and  $i\sspin \{\range{1}{c^*(b)}\}$. The $l^{\prime}$-tuple of length $r^*(b,\,i)$ gives the differences of the indices $ind$ of consecutive odd numbers in $MDS^{\prime}(b,\,i)$. Its elements are  
\Beq
l^{\prime}_j\sspeq ind(co^{\prime}_j) \sspm ind(co^{\prime}_{j-1}),\ \ \text{for} \ \ j\sspeq \range{1}{r^*(b,\,i)},
\Eeq
with $ind(co^{\prime}_{r^*(b,\,i)}) \sspeq P(b)$.
\end{lemma}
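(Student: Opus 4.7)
The plan is to iterate the one-step recurrence $c'_s = \mathrm{mod}^*(2\,c'_{s-1},b)$ of Proposition \ref{RecMDS} along the block of indices strictly between two consecutive odd entries $co'_{j-1}$ and $co'_j$, and to identify that block as a run of pure doublings closed off by a single reflection. The input is handled uniformly because $c'_0 = a(b,i) \in RRS^*odd(b)$ satisfies the same relation $c'_1 = \mathrm{mod}^*(2\,c'_0,b)$, so the recurrence applies from $s=1$ onward.

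The key preliminary step is a parity dichotomy that follows directly from the two cases of Proposition \ref{RecMDS}. If $c'_{s-1}\le\lfloor(b-1)/4\rfloor$, then $c'_s = 2\,c'_{s-1}$ is even; if $c'_{s-1}>\lfloor(b-1)/4\rfloor$, then $c'_s = b-2\,c'_{s-1}$ is odd, since $b$ is odd while $2\,c'_{s-1}$ is even. Hence the parity of $c'_s$ is controlled entirely by whether $c'_{s-1}$ lies below or above the threshold $\lfloor(b-1)/4\rfloor$, regardless of the parity of $c'_{s-1}$ itself.

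With this in hand, starting from the odd entry $co'_{j-1} = c'_m$, I would iterate the doubling branch: as long as $2^{t-1}\,co'_{j-1} \le \lfloor(b-1)/4\rfloor$, one obtains $c'_{m+t} = 2^t\,co'_{j-1}$, all of which are even by the dichotomy. Let $l'_j$ be the least positive integer with $2^{\,l'_j - 1}\,co'_{j-1} > \lfloor(b-1)/4\rfloor$; then at step $m+l'_j$ the reflection branch of Proposition \ref{RecMDS} must fire, giving
\[ co'_j \;=\; c'_{m+l'_j} \;=\; b - 2\cdot 2^{\,l'_j - 1}\,co'_{j-1} \;=\; b - 2^{\,l'_j}\,co'_{j-1}, \]
which is odd by the dichotomy and is therefore the next odd entry in the list $Co'(b,i)$. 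By construction $l'_j = \mathrm{ind}(co'_j) - \mathrm{ind}(co'_{j-1})$, which yields the claimed identification of the $l'$-tuple. The terminal convention $\mathrm{ind}(co'_{r^*(b,i)}) = P(b)$ merely records the cyclic closure: by part 3 of Proposition \ref{MDSth} the input $a(b,i) = co'_0$ reappears at position $P(b)$, so the gap from the last odd entry back to $co'_0$ is well defined.

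The main obstacle is purely bookkeeping around the threshold. One needs to verify that during the doubling run no intermediate value $2^t\,co'_{j-1}$ overshoots $(b-1)/2$ and leaves the range of $RRS^*(b)$, which is automatic because $2^t\,co'_{j-1} \le 2\lfloor(b-1)/4\rfloor \le (b-1)/2$ for every $t < l'_j$; and one must handle the two residues $b\equiv 1,3\pmod 4$ uniformly, which is immediate from the strict-versus-non-strict form of the case split in Proposition \ref{RecMDS}, so that the threshold crossing is well defined in both parities of $b$.
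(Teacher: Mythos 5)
Your proof is correct and follows essentially the same route as the paper: between consecutive odd entries the recurrence of Proposition \ref{RecMDS} performs $l^{\prime}_j-1$ pure doublings and then one reflection $c\mapsto b-2c$, yielding $co^{\prime}_j=b-2^{l^{\prime}_j}co^{\prime}_{j-1}$. The only difference is that you make explicit the parity dichotomy (doubling branch gives even, reflection branch gives odd) and the threshold bookkeeping, which the paper's terser proof leaves implicit.
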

\begin{proof}
Between the two consecutive odd numbers $co^{\prime}_{j-1}\sspeq c^{\prime}_{\sum_{i=1}^{j-1}\,l_{i}^{\prime}}$ and $co^{\prime}_j\sspeq c^{\prime}_{\sum_{i=1}^{j-1}\,l_{i}^{\prime} + l_j^{\prime}}$ (empty sums vanish) there are $l_j^{\prime}\sspm 1$ doublings in $MDS^{\prime}$, and for the next odd number the second version of the recurrence of the $c^{\prime}\sspeq c$ elements given in \Eq{15} of {\it Proposition} \ref{RecMDS}, applies. This means that $co^{\prime}_{j}\sspeq b \sspm 2\,(2^{l^{\prime}_j-1})\,co^{\prime}_{j-1}\sspeq b\sspm 2^{l^{\prime}_j}\,co^{\prime}_{j-1}$.  
\end{proof}
\psn
For the odd elements of of $MDS(b,\,i)$, given by $Co(b,\,i)\sspeq [co(b,\,i)_1,\,...,\, co(b,\,i)_{r^*(b,\,i)}]$, this translates to
\begin{corollary} {\bf Recurrence for odd elements of $\bf MDS(b,\,i)$} \label{MDSoddCor}
\Beqarray
&&co(b,\,i)_j\sspeq b\sspm 2^{l^{\prime}(b,\,i)_j}\,co(b.\,i)_{j-1}, \ \ \text{for}\ \ j\sspeq \range{2}{r^*(b,\,i)}, \nonumber  \\
&&\text{and} \ \  co(b,\,i)_1\sspeq b\sspm 2^{l^{\prime}(b,\,i)_1}\, a(b,\,i)\, .
\Eeqarray
\end{corollary}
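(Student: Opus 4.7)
The plan is to reduce the corollary to the preceding lemma by observing that $MDS(b,\,i)$ is a cyclic shift of $MDS^{\prime}(b,\,i)$: from {\it Proposition}~\ref{MDSth}, part {\it 3}, one has $c_{P(b)}\sspeq a(b,\,i)$, and the definition of $MDS^{\prime}$ merely moves this value from the final position $P(b)$ to position $0$, leaving $c^{\prime}_j\sspeq c_j$ for $j\sspeq \range{1}{P(b)-1}$. Consequently the odd entries of the two cycles are the same values in the same cyclic order, and the natural identification between $[co(b,\,i)_1,\,co(b,\,i)_2,\,...,\,co(b,\,i)_{r^*(b,\,i)}]$ and $[co^{\prime}_0,\,co^{\prime}_1,\,...,\,co^{\prime}_{r^*(b,\,i)-1}]$ is $co(b,\,i)_j\sspeq co^{\prime}_j$ for $j\sspeq \range{1}{r^*(b,\,i)-1}$, together with $co(b,\,i)_{r^*(b,\,i)}\sspeq co^{\prime}_0\sspeq a(b,\,i)$.

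Under this identification the index-difference data match up as well: $l^{\prime}(b,\,i)_1\sspeq l^{\prime}_1$ is the gap from $a(b,\,i)\sspeq co^{\prime}_0$ to $co^{\prime}_1$; the values $l^{\prime}(b,\,i)_j\sspeq l^{\prime}_j$ for $j\sspeq \range{2}{r^*(b,\,i)-1}$ are the interior gaps; and $l^{\prime}(b,\,i)_{r^*(b,\,i)}\sspeq l^{\prime}_{r^*(b,\,i)}\sspeq P(b)\sspm ind(co^{\prime}_{r^*(b,\,i)-1})$ is the wrap-around gap from the last odd entry of $MDS^{\prime}$ back cyclically to $a(b,\,i)$. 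With this dictionary in hand, the lemma's recurrence $co^{\prime}_j\sspeq b\sspm 2^{l^{\prime}_j}\,co^{\prime}_{j-1}$ for $j\sspeq \range{1}{r^*(b,\,i)-1}$ translates line by line to the corollary's recurrence for the same range, and the specific case $j\sspeq 1$ delivers exactly the second displayed equation $co(b,\,i)_1\sspeq b\sspm 2^{l^{\prime}(b,\,i)_1}\,a(b,\,i)$.

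The one step that is not a purely notational rewrite is the closing case $j\sspeq r^*(b,\,i)$, where one must verify $a(b,\,i)\sspeq b\sspm 2^{l^{\prime}(b,\,i)_{r^*(b,\,i)}}\,co(b,\,i)_{r^*(b,\,i)-1}$. Here I would repeat, verbatim, the argument from the proof of the lemma: starting from the odd entry $co(b,\,i)_{r^*(b,\,i)-1}$, the recurrence of {\it Proposition}~\ref{RecMDS} performs $l^{\prime}_{r^*(b,\,i)}\sspm 1$ pure doublings and then precisely one step of the form $c_j\sspeq b\sspm 2\,c_{j-1}$ that lands on the next odd entry, producing $b\sspm 2^{l^{\prime}_{r^*(b,\,i)}}\,co(b,\,i)_{r^*(b,\,i)-1}$. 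By the cyclic-shift identification and {\it Proposition}~\ref{MDSth}, part {\it 3}, this next odd entry is $c_{P(b)}\sspeq a(b,\,i)\sspeq co(b,\,i)_{r^*(b,\,i)}$, which closes the loop. I do not expect a substantive obstacle: the entire corollary is a bookkeeping reindexing of the lemma, made possible by the single nontrivial ingredient $c_{P(b)}\sspeq a(b,\,i)$.
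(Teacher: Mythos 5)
Your proposal is correct and follows the same route as the paper, which presents the corollary as a direct translation of the preceding lemma via the cyclic shift $MDS^{\prime}\to MDS$ (the paper in fact supplies no separate proof beyond the phrase ``this translates to''). Your explicit treatment of the wrap-around case $j\sspeq r^*(b,\,i)$, which lies outside the stated range of the lemma's recurrence and genuinely requires re-running the doubling-count argument together with $c_{P(b)}\sspeq a(b,\,i)$, is a welcome piece of care that the paper leaves implicit.
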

\psn
\begin{example}\label{Ex9}
$b\sspeq 63$, $i\sspeq 2$: $P(63)\sspeq 6$, $r^*\sspeq 4$, $MDS\sspeq [10,\,20,\,23,\,17,\,29,\,5]$,\ $MDS^{\prime}\sspeq [5,\,10,\,20,\,23,\,17,\,29]$, $Co^{\prime}\sspeq [5,\,23,\,17,\,29]$, $l^{\prime}\sspeq [3,\,1,\,1,\,1]$, because \eg, $l^{\prime}_1\sspeq ind(23)\sspm ind(5) = 3\sspm 0 = 3$, and $l^{\prime}_4\sspeq P(63)\sspm ind(29)\sspeq 6\sspm 5\sspeq 1$. \pn
Recurrences: $17\sspeq co^{\prime}_2 \sspeq 63 \sspm 2^1\,co^{\prime}_1\sspeq 63\sspm 2\cdot 23\sspeq 17$. \pn
\hskip 2.5cm $29\sspeq co_3 \sspeq 63 \sspm 2^1\,co_2\sspeq 63\sspm 2\cdot 17\sspeq 29$.
\end{example}
\begin{definition} {\bf two lists ${\bf U(b,\,i)}$ and ${\bf L(b,\,i)}$}
\psn
Let $MBS(b,\,i)$, for odd integer $b \sspgeq 3$, and $i\sspin \{\range{1}{c^*(b)\sspeq c(b)}\}$, be the period of length $P(b)\sspeq k(b)$. The elements of the list $MBS(b,\,i)$ are denoted by $c(b,\,i)_j$ (abbreviated as $c_j$). Let the sub-list of odd numbers of $MBS(b,\,i)$ be denoted by $Co(b,\,i)$ of length $r^*(b,\,i)$ with elements $co_j$ (no interchange of the odd elements is allowed). The position (index) of $co_j$ in the list $MBS(b,\,i)$ is denoted by $ind(co_j)$.\pn 
\begin{enumerate}
\item The list $U(b,\,i)$ has elements $u_j$, for $j\sspeq \{\range{1}{r^*(b,\,i)}\}$, given by 
\Beq
u_j\sspeq co_{r^*(b,\,i)-j+1}\,,
\Eeq
\ie the odd numbers of $MBS(b,\,i)$ are read backwards. 
\item The list $L(b,\,i)$ has elements $l_j$, for $j\sspeq \{\range{1}{r^*(b,\,i)}\}$, given by 
\Beq
l_j\sspeq ind(u_j) \sspm ind(u_{j+1}),\ \  \text{with}\  \ ind(u_{r^*(b,\,i) +1}) \sspdef 0,
\Eeq
\ie the number of steps to go in $ MDS(b,\,i)$ backwards from one odd number to the next is counted (in a cyclic manner) .
\end{enumerate} 
\end{definition}
\begin{example}\label{Ex10}
$b\sspeq 63$, $c^*(63)\sspeq 3$, $P(63) \sspeq 6$.\pn
$MDS(63,3)\sspeq [22,\,19,\,25,\,13,\,26,\,11]$, $Co(63,\,3)\sspeq [19,\,25,\,13,\,11]$, $r^*(63,\,3)\sspeq 4$, \pn
$ind(19)\sspeq 2$, $ind(25)\sspeq 3$, $ind(13)\sspeq 4$, $ind(11)\sspeq P(63)\sspeq 6$.\pn
$U(63,\,3)\sspeq [11,\,13,\,25,\,19]$, $\text{length}(U(63,\,3))\sspeq r(63,\,3)\sspeq 4$.\pn
$L(63,\,3)\sspeq [2,\,1,\,1,\,2]$, $\text{length}(L(63,\,3))\sspeq \text{length}(U(63,\,3))$\,.\pn
\end{example}
\begin{proposition} \label{MDSSigma}
Let $MBS(b,\,i)$, $U(b,\,i)$ and $L(b,\,i)$ be as in the definition above, then 
\Beq
 U(b,\,i) \sspeq A(b,\,i),\ \ \text{and}\ \ U(b,\,i) \sspeq K(b,\,i),\   
\Eeq
with the upper and lower lines $A(b,\,i)$ and $K(b,\,i)$ of the coach $\Sigma(b,\,i)$, respectively.
\end{proposition}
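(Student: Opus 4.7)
The plan is to identify $U(b,i)$ as the coach upper row $A(b,i)$ by showing that the reverse-ordered list of odd entries of $MDS(b,i)$ satisfies the Hilton--Pedersen recurrence~\ref{arec} with the same starting value as $\Sigma(b,i)$. From there, $L(b,i)=K(b,i)$ follows from the $2$-adic valuation interpretation of both tuples (the second equality in the statement, displayed as $U(b,i)=K(b,i)$, is clearly meant to read $L(b,i)=K(b,i)$).

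The engine is Corollary~\ref{MDSoddCor}: consecutive odd members of $MDS(b,i)$ obey
$$co_j \;=\; b - 2^{l'_j}\,co_{j-1},$$
which rearranges to $co_{j-1}=(b-co_j)/2^{l'_j}$. Because $co_{j-1}$ is odd and coprime to $b$, the exponent $l'_j$ is exactly the $2$-adic valuation of $b-co_j$. Setting $u_m:=co_{r^*-m+1}$, one obtains $u_1=co_{r^*}=a(b,i)$ by Proposition~\ref{MDSth}(3), and the rearranged recurrence becomes $u_{m+1}=(b-u_m)/2^{l'_{r^*-m+1}}$, which is exactly the coach recurrence~\ref{arec}. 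A short index check, accounting for the shift between $MDS'$ (which places $a(b,i)$ at position $0$) and $MDS$ (which places it at position $P(b)$), then confirms $l_m=l'_{r^*-m+1}$, so the exponents that arise are precisely the entries of $L(b,i)$.

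To conclude $U(b,i)=A(b,i)$ as ordered lists, it suffices to match starting values and invoke uniqueness of the coach cycle generated by its first entry. Both procedures select the $i$-th input $a(b,i)$ as the smallest element of $RRS^*odd(b)=RRSodd(b)$ not appearing in any previously constructed cycle (by Proposition~\ref{MDSth}(4) for $MDS$, and by the defining procedure for $\Sigma$). An induction on $i$ then forces the inputs to coincide, so $U$ and $A$ satisfy the same recurrence from the same initial point and therefore agree term-by-term. Since each $l_m$ equals the $2$-adic valuation of $b-u_m$, it coincides with the coach exponent $k_m$ by definition, yielding $L(b,i)=K(b,i)$.

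The main obstacle is the index bookkeeping: the lists $Co$ and $Co'$ differ only in the placement of the entry $a(b,i)$ (end vs.\ start), and one must track this carefully when verifying $l_m=l'_{r^*-m+1}$, in particular for the extreme indices $m=1$ and $m=r^*$ where the convention $ind(u_{r^*+1}):=0$ interacts with $ind_{MDS'}(co'_{r^*})=P(b)$. Once this identification is pinned down, the rest is essentially the observation that the coach recurrence is the MDS recurrence along odd entries read backwards.
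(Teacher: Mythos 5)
Your proposal is correct and follows essentially the same route as the paper: both rest on Corollary~\ref{MDSoddCor}, read the odd entries of $MDS(b,\,i)$ backwards to recover the coach recurrence~\ref{arec}, and identify the exponents $l_j$ with the $2$-adic valuations $k_j$ (and you rightly note that the second displayed equality in the statement is a typo for $L(b,\,i)\sspeq K(b,\,i)$). The only divergence is minor: where you obtain $r^*(b,\,i)\sspeq r(b,\,i)$ from the determinism of the coach recurrence once the starting values are matched, the paper derives it directly by counting sign flips in Corollary~\ref{RecMDSCor} and invoking the quasi-order theorem~\ref{quoth}.
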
 
\begin{proof}
By definition of $U(b,\,i)$ the length $r^*(b,\,i)$ has been identified with $r(b,\, i)$, the length of coach $\Sigma(b,\,i)$. This is possible because the recurrence of the $MDS(b,\,i)$ elements $c_j$, when considered modulo $b$, satisfy the third equation of the {\it Corollary} \ref{RecMDSCor}. This implies, with {\it Proposition} \ref{MDSth}, {\it 3}, that $ a(b,\,i) \sspeq c_{P(b)} \sspeq \modan{a(b,\,i)\,(-1)^{r^*(b,\,i)}\, 2^{P(b)}}{b}$ because the number of sign flips starting with $c_1$, ending with $c_{P(b)}$ is the number $r^*(b,\,i)$ of odd numbers of $MDS(b,\,i)$. The input $a(b,\,i)\sspkl (b-1)/2$ and coprime to $b$ drops out, and from $P(b)\sspeq k(b)$ and the quasi-order theorem \ref{quoth} one can identify $r^*(b,\,i)$ with $r(b,\,i)$. 
\psn
From $U(b,\,i)$ and $L(b,\,i)$ the recurrence for coaches, \Eq{1}, can be proved by identifying $l(b,\,i)_j$ with $k(b,\,i)_j$. The reversed $l-$tuple 
is $l^{\prime}$, with $l^{\prime}(b,\,i)_j \sspeq l(b,\,i)_{r^*(b,\,i)-j+1}$.\pn
The input of $MDS(b,\,i)$ is $A(b,\,i)_1 = a(b,\,i) = u(b,\,i)_1 \sspeq co(b,\,i)_{r^*(b,\,i)}$. With \Eq{25} one has to prove 
\Beq
b\sspm co(b,\,i)_{r^*(b,\,i)-j+1} \sspeq 2^{l(b,\,i)_j}\,co(b,\,i)_{r^*(b,\,i)-j}, \ \ \text{for}\ \  j\sspeq \range{1}{r^*(b,\,i)-1}\, .
\Eeq
The input is now $co(b,\,i)_{r^*(b,\,i)}\sspeq a(b,\,i)$. An index change $\bar j\sspeq r^*(b,\,i)-j+1$, and the relation between $l$ and $l^{\prime}$ gives
\Beq
co(b,\,i)_{\bar j}\sspeq b\sspm 2^{l^{\prime}(b,\,i)_{\bar j}}\, co(b,\,i)_{\bar j\sspm 1},\ \ \text{for} \ \  \bar j \sspeq \range{2}{r^*(b,\,i)}\,,
\Eeq
and the input is now $co(b,\,i)_1\sspeq b\sspm 2^{l^{\prime}(b,\,i)_1}\, a(b,\,i)$.
But this is the proved recurrence from {\it Corollary} ~\ref{MDSoddCor} \Eq{26}.
\end{proof}
\pn
{\bf A2) $\bf \Sigma(b) \Longrightarrow MDS(b)$}
\psn
The proof for this direction has essentially been given by {\sl Hilton} and {\sl Pedersen} \cite{HP}, pp. 102 - 103, in their proof of the quasi-order theorem. It will be recapitulated here in a different notation. 
\begin{definition} {\rm \cite{HP}} {\bf Modified Coach Symbol $\bf MCSy(b,\,i)$}
\psn
For fixed odd $b\sspgeq 3$ and $i\sspin\{\range{1}{c(b)}\}$ the modified coach symbol $MCSy(b,\,i)$ has upper line $Cy(b,\,i)$ with odd numbers (in the following the argument $(b,\,i)$ will be mostly suppressed)
\Beq
cy_j\sspeq a_{r+2-j},\  \text{for} \  j\sspeq 1,\,2,\,...,\, r+1,
\Eeq
with the upper line coach numbers $a_j$ from $A(b,\,i)$, of length $r\sspeq r(b,\,i)$, and $a_{r+1}\sspeq a_1$, hence $cy_{1}\sspeq a_1\sspeq cy_{r+1}$.\pn
The lower line $Ly(b,\,i)$ of length $r$ has elements
\Beq
ly_j\sspeq k_{r+1-j}, \  \text{for} \  j\sspeq 1,\,2,\,...,\, r,
\Eeq 
with the lower line coach numbers $k_j\sspeq k(b,\,i)_j$ from $K(b,\,i)$.
\end{definition}
\pn
\Beq
\sum_{j=1}^{r}ly_j\sspeq \sum_{j=1}^{r}k_j\sspeq k(b),\ \  \text{for\ each}\  i.
\Eeq
\begin{definition} {\rm \cite{HP}} {\bf Extended Modified Coach Symbol $\bf EMCSy(b,\,i)$}
\psn
Between each $cy_j$ and $cy_{j+1}$ of the upper line $Cy(b,\,i)$ of $MCSy(b,\,i)$\  $ly_{j}-1$ consecutive doublings of $cy_j$ are inserted. This results in a one line list $EMCSy(b,\,i)$ of length $k(b)\sspp 1$ but now with offset index $0$.\pn
With $s_j\sspdef \sum_{q=1}^{j}\, ly_q$ (where $s_0\sspeq 0$), the elements $n_q$ of $EMCSy(b,\,i)$, for $q\sspeq \range{0}{k(b)\sspm 1}$, are formally
\Beq
n_{s_j\sspp p(j)} \sspeq 2^{p(j)}\,cy_{j+1},\ \ \text{for} \ \ p(j)\sspeq 0,\,1,\,...,\,ly_{j+1} - 1, \ \ \text{and} \ \ n_{k(b)\sspp 1}\sspeq cy_1,
\Eeq
for $j\sspin\{\range{0}{r(b,\,i)\sspm 1}\}$.
\end{definition}
\begin{example} \label{Ex11} 
$b\sspeq 63,\, i\sspeq 2,\, k(63)\sspeq 6,\, r(63,\,2)\sspeq 4$.\pn
$A\sspeq [5,\,29,\,17,\,23]$,\, $K\sspeq [1,\,1,\,1,\,3]$;\, $Cy\sspeq [5,\,23,\,17,\,29,\,5]$,\ $Ly \sspeq [3,\,1,\,1,\,1]$.\pn
$EMCSy(63,\,2)\sspeq [5,\,2^1\cdot5,\,2^2\cdot 5,\,23,\,17,\,29,\,5]\sspeq [5,\,10,\,20,\,23,\,17,\,29,\,5]$.
\end{example}
\pn
This shows, with the {\it Example} \ref{Ex9}, that $EMCSy(63,\,2)$ without its last entry coincides with $MDS^{\prime}(63,\,2)$, and without its first entry it becomes $MDS(63,\,2)$. 
\psn
\begin{proposition} {\bf $\bf MDS(b,\,i)$ from $\bf EMCSy(b,\,i)$ }
\psn
For odd $b\sspgeq 3$, and $i\sspin \{\range{1}{c(b)}\}$, the extended modified list $EMCSy(b,\,i)$, derived from the coach symbol $\Sigma(b,\,i)$,  gives without its first term $n_0\sspeq a(b,\,i)$ the modified modular doubling sequence $MDS(b,\,i)$.  
\end{proposition}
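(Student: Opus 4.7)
The plan is to reduce the claim to the recurrence of Proposition~\ref{RecMDS}: I will show that the list $(n_1, \ldots, n_{k(b)})$ obtained from $EMCSy(b,i)$ by removing its first entry satisfies that recurrence starting from the correct initial value $c_1 = \text{mod}^*(2a(b,i), b)$. Because the MDS recurrence is deterministic and both lists have length $P(b) = k(b)$ (Proposition~\ref{MDSth}, part~2), this will force them to coincide. The first technical step is to translate the coach recurrence $b - a_j = 2^{k_j} a_{j+1}$ into the reversed indexing $cy_m = a_{r+2-m}$, $ly_m = k_{r+1-m}$, with the convention $cy_{r+1} := cy_1$; substituting $j = r+1-m$ produces the key identity
$$
b - cy_{m+1} \;=\; 2^{\,ly_m}\, cy_m, \qquad m = 1, 2, \ldots, r,
$$
where $cy_m$ is odd, so that $2^{ly_m}$ is the exact $2$-adic valuation of $b - cy_{m+1}$.

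From this identity, together with $cy_m, cy_{m+1} \in [1,(b-1)/2]$ (as elements of the upper rows of $\Sigma(b,i)$), the integer $2^{ly_m - 1}\, cy_m$ lies in $[(b+1)/4,\,(b-1)/2]$. Two size inequalities follow: $2^{ly_m - 1}\, cy_m > \lfloor (b-1)/4 \rfloor$, and for every $0 \leq p \leq ly_m - 2$ one has $2^p\, cy_m \leq 2^{ly_m - 2}\, cy_m \leq (b-1)/4$, which for integers forces $2^p\, cy_m \leq \lfloor (b-1)/4 \rfloor$. These are precisely the conditions that make the block structure of $EMCSy(b,i)$ match Proposition~\ref{RecMDS} block by block: inside the $m$-th block the transition $2^p\, cy_m \mapsto 2^{p+1}\, cy_m$ for $0 \leq p \leq ly_m - 2$ is the allowed MDS doubling, and the transition from the last block entry $2^{ly_m - 1}\, cy_m$ to the first entry $cy_{m+1}$ of the next block (or to $cy_{r+1} = cy_1 = a$ after the $r$-th block) is the allowed MDS flip $n \mapsto b - 2n$, since $b - 2 \cdot 2^{ly_m - 1}\, cy_m = b - 2^{ly_m}\, cy_m = cy_{m+1}$ by the key identity.

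The base case $n_1 = c_1$ then splits into two sub-cases. If $ly_1 \geq 2$, the doubling bound at $p = 0$ forces $a = cy_1 \leq \lfloor (b-1)/4 \rfloor$, so $c_1 = 2a = n_1$; if $ly_1 = 1$, the flip bound at $m = 1$ forces $a > \lfloor (b-1)/4 \rfloor$, so $c_1 = b - 2a = cy_2 = n_1$, once more via the key identity. Combined with the block-by-block matching, this yields $(n_1, \ldots, n_{k(b)}) = (c_1, \ldots, c_{P(b)}) = MDS(b,i)$, with $n_{k(b)} = cy_1 = a = c_{P(b)}$ serving as a consistency check against Proposition~\ref{MDSth}, part~3. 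The main obstacle is really just the index bookkeeping under the reversal between $\Sigma(b,i)$ and $EMCSy(b,i)$, together with correctly closing the last block back to $cy_1$; once the key identity $b - cy_{m+1} = 2^{ly_m}\, cy_m$ is in place, the size inequalities are immediate and everything else is routine substitution.
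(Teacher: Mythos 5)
Your proposal is correct and follows essentially the same route as the paper: both arguments reduce the claim to showing that the list obtained from $EMCSy(b,\,i)$ satisfies the $MDS$ recurrence of Proposition~\ref{RecMDS} with the correct initial value. The only difference is that the paper delegates the recurrence for the $EMCSy$ entries to Hilton--Pedersen's Eq.~(7.13), whereas you derive it explicitly via the identity $b - cy_{m+1} = 2^{ly_m}\,cy_m$ and the accompanying size bounds, making the verification self-contained.
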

\begin{proof}
The recurrence relation with input $a(b,\,i)$ for $EMCSy(b,\,i)$ given in \cite{HP}. p. 103, \Eq{7.13}, for the entries $n_q$  (here with offset $q\sspeq 0$), coincides with the     
one given in {\it Corollary} \ref{RecMDSCor}, \Eq{16} for the $MDS(b,\,i)$ entries $c_j$.
\end{proof}
\pbn
{\bf B1) $\bf MDS(b) \Longrightarrow SBB(b)$}
\psn
The proof will be given element-wise:  $MDS(b,\,i) \Longrightarrow SBB(b,\,i)$, for odd $b\sspgeq 3$ and $i \sspin \{\range{1}{c^*(b)}\}$. It has been shown in {\it Proposition} ~\ref{MDSth} {\it 5.} that $c^*(b) \sspeq B(b)$, and in  ~\ref{MDSth} {\it 2.} that $P(b)\sspeq pes(b)$. The fixed arguments $(b,i)$ will be mostly suppressed.\psn
The $SBB$(b,\,i) recurrence for elements $q_j$ from \Eq{8} with input $q_0\sspeq a(b,\,i)$, an odd integer from $RRS^*(b)$, will be proved to follow from the recurrence {\it Proposition} ~\ref{RecMDS}, \Eq{15}, for the $MDS(b,\,i)$ cycle elements $c_j$ with input $a(b,i) = c_{P(b)}$ from {\it Proposition} ~\ref{MDSth} {\it 3.}\,.\psn
\begin{proposition}\ {\bf $\bf SBB(b,\,i)$ from $\bf MDS(b,\,i)$} \label{MDSSBB}
\psn
The elements of the (positive) $SBB(b,\i)$ cycle are given by
\Beq
q_j\sspeq b\sspm 2\,c_{P(b)-1+j}, \ \  \text{for} \ \ j\sspeq  
\range{0}{pes(b)},
\Eeq
where $c_j$ are the elements of the cycle $MDS(b,\,i)$ of length $P(b)\sspeq pes(b)$. $c_{P(b)+l}\sspeq c_l$, for $l\sspin \N$ (cyclicity). Hence the input is $q_0(b,\,i) \sspeq c(b,\,i)_{P(b)}\sspeq a(b,\,i)$.
\end{proposition}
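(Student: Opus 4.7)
The plan is to verify directly that the ansatz $q_j \sspdef b \sspm 2\, c_{P(b)-1+j}$ satisfies the two defining properties of the $SBB(b,\,i)$ cycle from equation~\ref{BBRec}: the recurrence $q_j \sspeq |b \sspm 2\, q_{j-1}|$, and the initial condition $q_0 \sspeq a(b,\,i)$. Four already-established facts will be needed: (i) each $c_l \sspin RRS^*(b)$ satisfies $c_l \sspleq (b\sspm 1)/2$ (Proposition~\ref{MDSth}~{\it 4.}), (ii) $c_{P(b)} \sspeq a(b,\,i)$ (Proposition~\ref{MDSth}~{\it 3.}), (iii) the one-line form $b \sspm 2\, c_m \sspeq |b \sspm 4\, c_{m-1}|$ obtained by rearranging the first equation of Corollary~\ref{RecMDSCor}, and (iv) the cyclicity $c_{P(b)+l} \sspeq c_l$ from primitive periodicity of $MDS(b,\,i)$.

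For the recurrence step, I would substitute the ansatz twice and apply~(iii):
\[
|b \sspm 2\, q_{j-1}| \sspeq |b \sspm 2(b \sspm 2\, c_{P(b)-2+j})| \sspeq |b \sspm 4\, c_{P(b)-2+j}| \sspeq b \sspm 2\, c_{P(b)-1+j} \sspeq q_j\,.
\]
The outer absolute value on the right-hand side of $q_j$ vanishes because $q_j \sspgeq 1$, a direct consequence of~(i). Indices exceeding $P(b)$ are reduced by the cyclicity~(iv), which legitimizes the use of~(iii) uniformly across the range $j\sspeq\range{1}{pes(b)}$.

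For the initial value, I would apply~(iii) at $m \sspeq P(b)$ and invoke~(ii) to obtain $|b \sspm 4\, c_{P(b)-1}| \sspeq b \sspm 2\, a(b,\,i)$. Of the two sign branches, the branch $b \sspm 4\, c_{P(b)-1} \sspeq b \sspm 2\, a(b,\,i)$ would force $c_{P(b)-1} \sspeq a(b,\,i)/2$, which is excluded because $a(b,\,i)$ is odd; the remaining branch $b \sspm 4\, c_{P(b)-1} \sspeq -(b \sspm 2\, a(b,\,i))$ gives $c_{P(b)-1} \sspeq (b \sspm a(b,\,i))/2$, and hence $q_0 \sspeq b \sspm 2\, c_{P(b)-1} \sspeq a(b,\,i)$, as required.

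The main obstacle is the sign analysis at the initial step: without the oddness of $a(b,\,i)$ both branches of the absolute value would be admissible, so the parity argument is essential for pinning down $c_{P(b)-1}$ in terms of $a(b,\,i)$. Everything else is a direct substitution, with the cyclic extension~(iv) ensuring that the $MDS$-recurrence can be invoked at every index in the required range.
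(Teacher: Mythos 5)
Your proposal is correct and follows essentially the same route as the paper: both verify the recurrence by substituting $q_j=b-2\,c_{P(b)-1+j}$ into the one-line form $2\,c_m=b-\lvert b-4\,c_{m-1}\rvert$ of the $MDS$ recurrence (using cyclicity to wrap the indices), and both settle the initial value $q_0=a(b,i)$ via the oddness of $c_{P(b)}=a(b,i)$. Your parity-based exclusion of one branch of the absolute value is just a repackaging of the paper's observation that the odd case of the two-line recurrence must apply at $j=P(b)$, so $c_{P(b)}=b-2\,c_{P(b)-1}$.
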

\begin{example} \label{Ex12}
$b\sspeq 63$, $i\sspeq 2$, $P \sspeq 6 \sspeq pes$, $MDS(63,\,2) \sspeq [10,\,20,\,23,\,17,\,29,\,5]$: \pn
$q_0\sspeq  \sspeq 63\sspm 2\,c_5\sspeq c_6 \sspeq 5$, $q_1\sspeq 63\sspm 2\,c_6 \sspeq 53$, $q_2\sspeq 63\sspm 2\,c_7 = 63\sspm 2\,c_1 \sspeq 43$, $q_3\sspeq 63\sspm 2\,c_2 \sspeq 23$, $q_4\sspeq 63\sspm 2\,c_3 \sspeq 17$, $q_5\sspeq 63\sspm 2\,c_4 \sspeq 29$, hence $SBB(63,\, 2)\sspeq (5,\,53,\,43,\,23,\,17,\,29)$.   
\end{example}
\begin{proof}
The recurrences fit because   $2\,c_j\sspeq b\sspm \abs{b\sspm 4\,c_{j-1}}$  from {\it Corollary} ~\ref{RecMDSCor}. Thus
\Beqarray
q_j&\sspeq& b\sspm (b\sspm \abs{b\sspm 4\, c_{P(b)\sspm 2\sspp j}})\sspeq \abs{b\sspm 4\,c_{P(b)\sspm 2\sspp j}} \nonumber \\
&\sspeq& \abs{b\sspm 2\,(b\sspm 2\,c_{P(b)\sspm 2\sspp j})}\sspeq \abs{b\sspm 2\,q_{j-1}}\, .
\Eeqarray
The input $a(b,\,i)$ relation holds because of the recurrence
\Eq{15}, the lower line, because $c_{P(b)}\sspeq a(b,\,i)$ is odd, hence $q_0\sspeq b\sspm 2\,c_{P(b)\sspm 1} \sspeq c_{P(b)}\sspeq a(b,\,i)$\, .
\end{proof}
\pbn
{\bf B2) $\bf SBB(b) \Longrightarrow MDS(b)$}
\psn
This direction is clear from reading {\bf B1)} backwards.
One can use cyclicity of $SBB(b,\,i)$ and replace $q_{j+1-pes(b)}$ by $q_{j+1}$, and use $q_{pes(b)}\sspeq q_0$ and $q_{pes(b)+1}\sspeq q_1$.\psn
\begin{proposition}\ {\bf $\bf MDS(b,\,i)$ from $\bf SBB(b,\,i)$} \label{SBBMDS}
\psn
The elements of the cycle $MDS(b,\, i)$ are given by
\Beq
c_j\sspeq \frac{1}{2}\,(b\sspm q_{j+1}), \ \  \text{for} \ \ j\sspeq  \range{1}{P(b)},
\Eeq
where $q_j$ are the elements of the cycle $SBB(b,\,i)$ of length $pes(b)\sspeq P(b)$. $q_{pes(b) + l}\sspeq q_l$, for $l\sspin \N_0$ (cyclicity). Hence the input is $a(b,\,i)\sspeq c(b,\,i)_{P(b)} \sspeq q_0(b,\,i)$.
\end{proposition}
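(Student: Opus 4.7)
The plan is to invert, term by term, the explicit relation from Proposition~\ref{MDSSBB}; essentially no new ideas are needed beyond cyclic re-indexing. Proposition~\ref{MDSSBB} provides
\Beq
q_j \sspeq b \sspm 2\,c_{P(b)-1+j}, \ \ \text{for}\ \ j\sspeq \range{0}{pes(b)},
\Eeq
which is affine in the $c$-variable and inverts in one step to $c_{P(b)-1+j} \sspeq \tfrac{1}{2}(b\sspm q_j)$.

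Next I would re-index. Setting $n\sspeq P(b)\sspm 1\sspp j$, so that $j\sspeq n\sspm P(b)\sspp 1$, the identity becomes $c_n\sspeq \tfrac{1}{2}(b\sspm q_{n-P(b)+1})$. Using the cyclicity of $SBB(b,\,i)$, namely $q_{pes(b)+l}\sspeq q_l$, together with $pes(b)\sspeq P(b)$ (Proposition~\ref{MDSth} part~2), one has $q_{n-P(b)+1}\sspeq q_{n+1}$, and substituting $j$ for $n$ yields the asserted formula for $j\sspeq \range{1}{P(b)}$. For $j\sspeq \range{1}{P(b)\sspm 1}$ the index $j\sspp 1$ already lies within the primitive period and no cyclic reduction is needed.

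Finally I would check the boundary case $j\sspeq P(b)$ together with the claimed input identification. At this index the formula reads $c_{P(b)}\sspeq \tfrac{1}{2}(b\sspm q_{P(b)+1})\sspeq \tfrac{1}{2}(b\sspm q_1)$, where cyclicity has been used. Since $a(b,\,i) \sspleq (b\sspm 1)/2$ the recurrence \Eq{8} gives $q_1\sspeq \abs{b\sspm 2\,q_0}\sspeq b\sspm 2\,a(b,\,i)$, which reproduces $c_{P(b)}\sspeq a(b,\,i)$ in agreement with Proposition~\ref{MDSth} part~3, and in particular $q_0(b,\,i)\sspeq a(b,\,i)$, as claimed. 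There is no real obstacle here; the only point requiring care is the bookkeeping of the two indices relative to the common period length $P(b)\sspeq pes(b)$, together with the one-step verification that the affine inversion respects the cyclic wrap-around at both ends of the primitive block.
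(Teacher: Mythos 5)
Your proposal is correct and follows essentially the same route as the paper's proof: invert the affine relation of Proposition~\ref{MDSSBB}, perform the index change $j'=P(b)-1+j$, invoke the cyclicity $q_{pes(b)+l}=q_l$, and confirm the input identification via $q_1=\abs{b-2\,q_0}=b-2\,a(b,\,i)$. (One trivial remark: at $j=P(b)-1$ the index $j+1=P(b)$ already needs the cyclic identification $q_{P(b)}=q_0$, contrary to your parenthetical aside, but this affects nothing.)
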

\begin{proof}
Use in {\bf B1)} the index change $j^{\prime} \sspeq P(b)\sspm 1 \sspp j$, cyclicity of $SBB(b,\,i)$, and the recurrence for $q_j$. For example, the equation for the input $a(b,\,i)$ is now $c_{P(b)}\sspeq \frac{1}{2}\,(b\sspm q_{P(b)+1}) \sspeq \frac{1}{2}\,(b\sspm q_{1})\sspeq q_0\sspeq a(b,\,i)$ from the recurrence $q_1\sspeq \abs{b\sspm 2\,q_0}\sspeq b\sspm 2\,a(b,\,i)$, because $RRS^*(b)\sspni a(b,\,i) \sspleq \frac{b-1}{2}$.  
\end{proof}
\begin{example}
$b\sspeq 63$, $i\sspeq 3$, $P \sspeq 6 \sspeq pes$, $ SBB(63,\,3)\sspeq (11,\,41,\,19,\,25,\,13,\,37)$:\pn
$c_1\sspeq \frac{1}{2}\,(63\sspm q_2)\sspeq \frac{1}{2}\,(63\sspm 19)\sspeq 22$, ...,\, $c_5\sspeq \frac{1}{2}\,(63\sspm q_6)\sspeq \frac{1}{2}\,(63\sspm q_0)\sspeq 26$, $c_6\sspeq \frac{1}{2}\,(63\sspm q_1)\sspeq 11$, hence $MDS(63,\,3)\sspeq [22,\,19,\,25,\,13,\,26,\,11]$.
\end{example}
\section{Acknowledgments}
\psn
This paper would not have been written without {\sl Gary W. Adamson}'s suggestion to study the coach system of {\sl Hilton} and {\sl Pedersen}, the book by {\sl Schick}, and the paper by {\sl Br\"andli} and {\sl Beyne}. He also introduced the author to his modified doubling sequence and his paper with {\sl Kappraff} on iterations. We had an extensive e-mail exchange. The author is very grateful to him. \pn
He also likes to thank {\sl Carl Schick}  and {\sl Gerold Br\"andli} for an inspiring e-mail exchange.

\pbn\pbn
\hrule
\pbn
AMS MSC numbers:  11B37, 11A07, 05C45. \psn
Keywords: Coach system, modified modular congruence, diagonals in regular $(2\,n)$-gons, cycles of integers, polynomial iteration, {\sl Euler} tours.
\pbn
\hrule
\pbn
Concerned with OEIS sequences:\pn
\seqnum{A000010},\, \seqnum{A000265},\, \seqnum{A001622},\, \seqnum{A003558},\, \seqnum{A023022},\, \seqnum{A038566},\, \seqnum{A053120},\, \seqnum{A055034},\, \seqnum{A065942},\, \seqnum{A082375},\, \seqnum{A082654},\, \seqnum{A127672},\, \seqnum{A135303},\, \seqnum{A187360},\, \seqnum{A216319},\, \seqnum{A216371},\, \seqnum{A232624},\, \seqnum{A268923},\,  \seqnum{A332433},\, \seqnum{A332434},\, \seqnum{A332435},\, \seqnum{A332436},\, \seqnum{A332437},\, \seqnum{A332439},\, \seqnum{A333848},\, \seqnum{A333849},\, \seqnum{A333850},\, \seqnum{A333851},\, \seqnum{A333853},\,  \seqnum{A333855},\, \seqnum{A334430}.
\pbn
\hrule
\vfill
\eject
\begin{landscape}
\begin{center}
{\large {\bf Table 1: $\bf P^*(b, x)$\ \text{for}\  $\bf b\sspeq 2\,n\sspp 1,\  n\sspeq 1,2,...,10$,\ \text {with}\  $\bf \boldsymbol{\rho}\sspeq \boldsymbol{\rho}(b)$}}
\end {center}
\begin{center}  
\begin{tabular}{|l|l|l|}\hline
\bf n & \bf b &\hskip 3cm $\bf P^*(b,x)$ \\ 
\hline\hline
$\bf 1$ & $\bf 3$  &  $ x-1 $   \\        
\hline
$\bf 2$ & $\bf 5$   &  $ x^2+(1-2\,\rho)\,x+1$      \\ 
\hline
$\bf 3$ & $\bf 7$   &  $ x^3+(3-2\,\rho^2)\,x^2+2\,\rho\,x-1 $    \\    
\hline
$\bf 4$ & $\bf 9$   &  $x^3-2\,\rho\,x^2+(-3+2\,\rho^2)\,x-1$  \\
\hline
$\bf 5$  & $\bf 11$  & $x^5+(-1+6\,\rho^2-2\,\rho^4)\,x^4+(-4-2\,\rho+2\,\rho^2+2\,\rho^3)\,x^3$ \\
&&$+(1+8\,\rho-8\,\rho^2-4\,\rho^3+2\,\rho^4)\,x^2+(3+4\,\rho-12\,\rho^2-2\,\rho^3+4\,\rho^4)\,x-1  $ \\    
\hline
$\bf 6$  & $\bf 13$  & $ x^6+(1-6 \rho+8 \rho^3-2 \rho^5) x^5+(-3-2 \rho-4 \rho^2+2 \rho^3+2 \rho^4) x^4 $      \\  
&&$+(-6+14\,\rho+10\,\rho^2-12\,\rho^3-4\,\rho^4+2\,\rho^5)\,x^3+(2+16\,\rho-24\,\rho^3+6\,\rho^5)\,x^2+(1-14\,\rho+10\,\rho^3-2\,\rho^5)\,x+1 $  \\
\hline
$\bf 7$ & $\bf 15$   & $x^4+(3-8\,\rho-2\,\rho^2+2\,\rho^3)\,x^3+(-18\,\rho+6\,\rho^3)\,x^2+(-2-10\,\rho+2\,\rho^3)\,x+1 $  \\     
\hline
$\bf 8$  & $\bf 17$  &  $x^8+(1+8\,\rho-20\,\rho^3+12\,\rho^5-2\,\rho^7)\,x^7+(-7+4\,\rho+8\,\rho^2-6\,\rho^3-8\,\rho^4+2\,\rho^5+2\,\rho^6)\,x^6$ \\
&& $ +(-26 \rho-38 \rho^2+60 \rho^3+36 \rho^4-30 \rho^5-8 \rho^6+4 \rho^7) x^5 +(9-46 \rho+124 \rho^3-6 \rho^4-80 \rho^5+2 \rho^6+14 \rho^7) x^4 $\\
&& $ +(52 \rho+32 \rho^2-142 \rho^3-24 \rho^4+84 \rho^5+4 \rho^6-14 \rho^7) x^3 +(-12+8 \rho+68 \rho^2-52 \rho^3-52 \rho^4+36 \rho^5+10 \rho^6-6 \rho^7) x^2$ \\
&&$ +(-20 \rho+6 \rho^2+20 \rho^3-2 \rho^4-4 \rho^5) x+1$\\
\hline
$\bf 9$  & $\bf 19$  & $ x^9+(-1+20 \rho^2-30 \rho^4+14 \rho^6-2 \rho^8) x^8+(-8-4 \rho+8 \rho^2+14 \rho^3-8 \rho^4-10 \rho^5+2 \rho^6+2 \rho^7) x^7$\\ 
&& $+(3+22 \rho-78 \rho^2-60 \rho^3+96 \rho^4+42 \rho^5-36 \rho^6-8 \rho^7+4 \rho^8) x^6$\\
&& $+(29-14 \rho-156 \rho^2+38 \rho^3+244 \rho^4-30 \rho^5-120 \rho^6+6 \rho^7+18 \rho^8) x^5 $\\
&& $+(-17-8 \rho+254 \rho^2-8 \rho^3-350 \rho^4+8 \rho^5+156 \rho^6-2 \rho^7-22 \rho^8) x^4$\\
&& $+(-20-30 \rho+112 \rho^2+90 \rho^3-136 \rho^4-56 \rho^5+52 \rho^6+10 \rho^7-6 \rho^8) x^3 $\\
&& $+(8+22 \rho-90 \rho^2-28 \rho^3+84 \rho^4+20 \rho^5-24 \rho^6-4 \rho^7+2 \rho^8) x^2$\\ 
&& $+(5+10 \rho-54 \rho^2-10 \rho^3+54 \rho^4+2 \rho^5-18 \rho^6+2 \rho^8) x-1$\\
\hline
$\bf 10$  & $\bf 21$ & $ x^6+(-1-12 \rho+10 \rho^3-2 \rho^5) x^5+(-10-26 \rho-2 \rho^2+16 \rho^3+2 \rho^4-2 \rho^5) x^4 $   \\
&&$+(36 \rho+16 \rho^2-26 \rho^3-6 \rho^4+4 \rho^5) x^3+(18+60 \rho-10 \rho^2-44 \rho^3+2 \rho^4+8 \rho^5) x^2+(10+10 \rho-18 \rho^2-4 \rho^3+4 \rho^4) x+1 $\\
\hline
$\hskip .2cm\bf \vdots$&\\
\hline
\end{tabular}
\end{center}
\end{landscape}
\vfill
\eject
\begin{center}
{\large {\bf Table 2: $\bf MDS(b)$\ \text{for}\  $\bf b\sspeq 2\,n\sspp 1,\  n\sspeq 1,\,2,\,...,\,35$.}}
\end {center}
\begin{center}  
\begin{tabular}{|l|l|l|}\hline
\bf n & \bf b &\hskip 3cm $\bf MDS(b,\,i),\ \ i\sspeq \range{1}{c^*(b)} $\ \  \\ 
\hline\hline
$\bf 1$ & $\bf 3$  &  $ [1] $   \\        
\hline
$\bf 2$ & $\bf 5$   &  $ [2,\, 1]$      \\ 
\hline
$\bf 3$ & $\bf 7$   &  $ [2,\, 3 ,\,1] $    \\    
\hline
$\bf 4$ & $\bf 9$   &  $[2,\,4,\, 1]$  \\
\hline
$\bf 5$  & $\bf 11$  & $[2,\,4,\, 3,\, 5,\, 1]$ \\   
\hline
$\bf 6$  & $\bf 13$  & $[2,\,4,\,5,\, 3,\, 6,\, 1]$    \\  
\hline
$\bf 7$ & $\bf 15$   & $[2,\, 4,\, 7,\, 1]$  \\     
\hline
$\bf 8$  & $\bf 17$  &  $[2,\, 4,\, 8,\, 1], \ [6 ,\, 5,\, 7,\, 3] $ \\
\hline
$\bf 9$  & $\bf 19$  &  $[2,\, 4,\, 8,\, 3,\, 6,\, 7,\, 5,\, 9,\, 1]$  \\ 
\hline
$\bf 10$  & $\bf 21$ & $ [2,\, 4,\, 8,\, 5,\,10,\, 1]$   \\
\hline
$\bf 11$  & $\bf 23$ & $[2,\,4,\,8,\,7,\,9,\,5,\,10,\,3,\,6,\, 11,\,1]$ \\
\hline
$\bf 12$  & $\bf 25$ & $[2,\,4,\,8,\,9,\,7,\, 11,\, 3,\, 6,\,12,\,1]$ \\
\hline
$\bf 13$  & $\bf 27$ & $[2,\,4,\,8,\,11,\,5,\,10,\,7,\,13,\,1]$ \\
\hline
$\bf 14$  & $\bf 29$ & $[2,\, 4,\, 8,\,13,\,3,\,6,\,12,\,5,\,10,\, 9,\,11,\,7,\,14,\,1]$ \\
\hline
$\bf 15$  & $\bf 31$ & $[2,\,4,\, 8,\,15,\,1],\ [6,\,12,\,7,\,14,\,3],\ [10,\,11,\,9,\,13,\,5]$ \\
\hline
$\bf 16$  & $\bf 33$ & $[2,\,4,\,8,\,16,\,1],\ [10,\,13,\,7,\,14,\,5]$ \\
\hline
$\bf 17$  & $\bf 35$ &$ [2,\, 4,\, 8,\, 16,\, 3,\ 6,\, 12,\ 11,\, 13,\, 9, \,17,\, 1] $ \\
\hline
$\bf 18$  & $\bf 37$ & $[2,\,4,\,8,\,16,\,5,\,10,\,17,\,3,\,6,\,12,\,13,\,11,\,15,\,7,\,14,\,9,\,18,\,1]$ \\
\hline
$\bf 19$  & $\bf 39$ & $[2,\,4,\,8,\,16,\,7,\,14,\,11,\,17,\,5,\, 10,\,19,\,1]$ \\
\hline
$\bf 20$  & $\bf 41$ & $[2,\,4,\,8,\,16,\,9,\,18,\,5,\,10,\,20,\,  1],\ [6,\,12,\,17,\,7,\,14,\,13,\,15,\,11,\,19,\,3]$ \\
\hline
$\bf 21$  & $\bf 43$ & $[2, 4, 8, 16, 11, 21, 1],\ [6, 12, 19, 5, 10, 20, 3], [14, 15, 13, 17, 9, 18, 7]$ \\
\hline
$\bf 22$  & $\bf 45$ & $[2, 4, 8, 16, 13, 19, 7, 14, 17, 11, 22, 1]$ \\ 
\hline
$\bf 23$  & $\bf 47$ & $[2, 4, 8, 16, 15, 17, 13, 21, 5, 10, 20, 7, 14, 19, 9, 18, 11, 22, 3, 6, 12, 23, 1] $ \\
\hline
$\bf 24$  & $\bf 49$ & $[2, 4, 8, 16, 17, 15, 19, 11, 22, 5, 10, 20, 9, 18, 13, 23, 3, 6, 12, 24, 1]$ \\ 
\hline
$\bf 25$  & $\bf 51$ & $ [2, 4, 8, 16, 19, 13, 25, 1],\ [10, 20, 11, 22, 7, 14, 23, 5]$ \\ 
\hline
$\bf 26$  & $\bf 53$ & $[2, 4, 8, 16, 21, 11, 22, 9, 18, 17, 19, 15, 23, 7, 14, 25, 3, 6, 12, 24, 5, 10, 20, 13, 26, 1] $ \\ 
\hline
$\bf 27$  & $\bf 55$ & $[2, 4, 8, 16, 23, 9, 18, 19, 17, 21, 13, 26, 3, 6, 12, 24, 7, 14, 27, 1] $ \\ 
\hline
$\bf 28$  & $\bf 57$ & $[2, 4, 8, 16, 25, 7, 14, 28, 1],\  [10, 20, 17, 23, 11, 22, 13, 26, 5] $ \\ 
\hline
$\bf 29$  & $\bf 59$ & $[2, 4, 8, 16, 27, 5, 10, 20, 19, 21, 17, 25, 9, 18, 23, 13, 26, 7, 14, 28, 3, 6, 12, 24, 11, 22, 15, 29, 1]$ \\
\hline
$\bf 30$  & $\bf 61$ & $[2, 4, 8, 16, 29, 3, 6, 12, 24, 13, 26, 9, 18, 25, 11, 22, 17, 27, 7, 14, 28, 5, 10, 20, 21, 19, 23, 15, 30, 1]$ \\
\hline
$\bf 31$  & $\bf 63$ & $[2, 4, 8, 16, 31, 1],\ [10, 20, 23, 17, 29, 5],\ [22, 19, 25, 13, 26, 11]$ \\
\hline
$\bf 32$  & $\bf 65$ & $[2, 4, 8, 16, 32, 1],\ [6, 12, 24, 17, 31, 3],\  [14, 28, 9, 18, 29, 7],\ [22, 21, 23, 19, 27, 11]$ \\
\hline
$\bf 33$  & $\bf 67$ & $[2, 4, 8, 16, 32, 3, 6, 12, 24, 19, 29, 9, 18, 31, 5, 10, 20, 27, 13, 26, 15, 30,$\\
& & \hskip .1cm $7, 14, 28, 11, 22, 23, 21, 25, 17, 33, 1]$ \\
\hline
$\bf 34$  & $\bf 69$ & $[2, 4, 8, 16, 32, 5, 10, 20, 29, 11, 22, 25, 19, 31, 7, 14, 28, 13, 26, 17, 34, 1] $ \\
\hline
$\bf 35$  & $\bf 71$ & $ [2, 4, 8, 16, 32, 7, 14, 28, 15, 30, 11, 22, 27, 17, 34, 3, 6, 12, 24, 23, 25, 21,$\\ 
&& \hskip .1cm $29, 13, 26, 19, 33, 5, 10, 20, 31, 9, 18, 35, 1]$ \\
\hline
\hskip .2cm$\bf \vdots$&\\
\hline
\end{tabular}
\end{center}
\vfill
\eject

\end{document}